\newtheorem{theorem}{Theorem}[section]
\newtheorem{proposition}[theorem]{Proposition}
\newtheorem{lemma}[theorem]{Lemma}
\newtheorem{corollary}[theorem]{Corollary}
\theoremstyle{definition}
\newtheorem{definition}[theorem]{Definition}
\theoremstyle{remark}
\newtheorem{remark}[theorem]{Remark}
\definecolor{darkgreen}{cmyk}{1,0,1,.2}
\definecolor{m}{rgb}{1,0.1,1}
\newdimen\theight
\def\TeXref#1{%
             \leavevmode\vadjust{\setbox0=\hbox{{\tt
                     \quad\quad  {\small \textrm #1}}}%
             \theight=\ht0
             \advance\theight by \lineskip
             \kern -\theight \vbox to
             \theight{\rightline{\rlap{\box0}}%
             \vss}%
             }}%
\begin{document}

 \title{A Combinatorial Study of the Fixed Point Index}
 \thanks{This research was partially supported by the grant PID2020-114474GB-I00 (AEI/FEDER, UE)
}

\author[J. Álvarez \and A. Majadas-Moure \and D. Mosquera
     ]{%
	Jesús A. Álvarez López \and Alejandro O. Majadas-Moure \and David Mosquera-Lois
}

\address{
         Jesús A. Álvarez López\\
         Departamento de Matemáticas, Universidade de Santiago de Compostela,
           SPAIN}
         \email{jesus.alvarez@usc.es}
              
\address{
		 Alejandro O. Majadas-Moure \\
		 Departamento de Matemáticas, Universidade de Santiago de Compostela, SPAIN}
		 \email{alejandro.majadas@usc.es}
		
 \address{
           	 David Mosquera-Lois \\
              Departamento de Matemáticas,  Universidade de Vigo, SPAIN }\email{david.mosquera.lois@uvigo.gal}
		

\begin{abstract} 
We introduce a theory of integration with respect to the fixed point index, offering a substantial improvement over previous approaches based on the Lefschetz number. This framework eliminates several restrictive assumptions—such as the need for definability, openness, or f-invariance of subspaces—thereby allowing broader applicability. We also present a natural combinatorial adaptation of the fixed point index that extends the combinatorial Lefschetz number. This extension yields new topological and homotopical invariance results and facilitates the integration of real-valued functions with respect to fixed points.
\end{abstract}



\maketitle

\section{Introduction}

The Lefschetz number is a classical topological invariant associated with a continuous map \( f: X \to X \) on a compact polyhedron, defined as
\[
\Lambda(f) = \sum_{i \geq 0} (-1)^i \operatorname{tr}(f_*: H_i(X) \to H_i(X)),
\]
where \( f_* \) denotes the induced morphism in rational homology. This invariant not only generalizes the Euler--Poincaré characteristic (recovered by taking \( f = \mathrm{id} \)), but also plays a central role in fixed point theory by guaranteeing the existence of fixed points under certain conditions.

A more flexible tool in this context is the fixed point index, which extends the Lefschetz number to broader classes of spaces and maps. It is defined for a larger family of topological spaces and satisfies the relation \( \Lambda(f) = i(X, f, X) \), making it fundamental to fixed point theory in both compact and non-compact settings (see \cite{Brown} for details).

In recent work, combinatorial generalizations of these invariants have been developed. The combinatorial Euler characteristic (see \cite{B-G1, C-G-R}) and combinatorial Lefschetz number (see \cite{M-M1, M-M2}) were introduced in the context of o-minimal and definable structures. These notions allow integration over non-compact or non-open subspaces, making them suitable for applications such as sensor networks.

A key advancement in this direction is the combinatorial Lefschetz number defined for definable, \( f \)-invariant (non necessarily compact) subspaces of simplicial complexes introduced in \cite{M-M1}. This extension presents two notable advantages: first, it yields a more general fixed point theorem (see \cite[Theorem~4.1]{M-M1}) than the classical Lefschetz fixed point theorem---especially when restricted for homeomorphisms; second, it provides a finer measure of fixed points, as illustrated by \cite[Example 4.3]{M-M1}:

Consider the 2-sphere \( S^2 \) and a homeomorphism \( f: S^2 \to S^2 \) that reflects over the equator. The classical Lefschetz number in this case is zero, despite every point on the equator being a fixed point. However, if we model \( S^2 \) as the suspension \( \Sigma X_m \) of a regular \( m \)-gon (with \( X_m \) an empty polygon of \( m \) edges), and define a subcomplex \( U_m \) by removing the open edges along the equator, then \( U_m \) is \( f \)-invariant. The combinatorial Lefschetz number \( \Lambda(f, U_m)_{X_m} \) then correctly counts the number of fixed vertices:
\[
\Lambda(f, U_m)_{X_m} = m.
\]
This example illustrates the refined resolution of fixed point data provided by the combinatorial framework---where the classical theory fails to detect fixed points due to cancellation, the combinatorial Lefschetz number recovers the precise count.

Despite these advances, prior definitions still rely on restrictive assumptions: subspaces must often be definable and $f$-invariant ($f(A)=A$), and maps are usually required to be homeomorphisms. These limitations obstruct the development of a general integration theory and fixed point calculus.

In this paper, we address these limitations by introducing a combinatorial fixed point index---a natural generalization of both the classical index and the combinatorial Lefschetz number. This new framework allows us to extend integration theory to real-valued functions without requiring openness, definability, or $f$-invariance (see Section \ref{int real}); to formulate new topological (Theorem~\ref{thm: inv top num comb} -generalizing \cite[Remark A.7]{McCrory}-) and homotopical (Corollary~\ref{inv homotopica}) invariance results; and to provide a foundation for measuring fixed point ``density'' in combinatorial and o-minimal contexts. This new index enables integration over broader families of spaces and maps, including those where $f$ is not a homeomorphism and where subspaces need not be invariant or open. It also leads to a Fubini-type formula (Theorem~\ref{especie de fubini}) for fiber bundles, further expanding the algebraic-topological tools available for studying fixed point data.

At the heart of this approach is a simple but powerful idea: define the combinatorial fixed point index of a subspace $A \subset X$, not by direct evaluation over \( A \), but by applying the classical index to its interior $\mathring{A}$, whenever $f$ has no fixed points on the boundary. This allows us to extend the fixed point calculus while preserving desirable properties such as additivity and homotopy invariance.

The structure of the paper is as follows. In Section \ref{seccion primeros resultados}, we recall the necessary background on o-minimal structures and definable spaces, and we introduce an axiomatic framework for the combinatorial Lefschetz number, culminating in a generalization: the combinatorial fixed point index. This section also establishes topological and homotopical invariance results for this index. In Section \ref{sec:integration_comb_index} we develop a theory of integration with respect to the combinatorial fixed point index, proving key properties such as a product formula and a Fubini-type theorem for fiber bundles. In Section \ref{int real}, the combinatorial fixed point index enables to extend the integration theory to real-valued functions, defining suitable Riemann-type sums and proving their convergence. This construction enables fixed point integration beyond characteristic functions, without assuming openness, definability, or $f$-invariance.

\section{The combinatorial Lefschetz number and the combinatorial fixed point index}\label{seccion primeros resultados}

\subsection{Definable structures}

We introduce the definable structures we will work with. For a more detailed account on $o$-minimal topology we refer the reader to \cite{Dries}. Recall that an \textit{$o$-minimal} structure over $\mathbb{R}$ is a collection $\mathscr{A}=\{\mathscr{A}_n\}_{n\in \mathbb{N}}$ so that the following properties hold:
\begin{enumerate}
\item $\mathscr{A}_n$ is an algebra of subsets of $\mathbb{R}^n$ for each $n\in \mathbb{N}$.
\item The family $\mathscr{A}$ is closed with respect to cartesian products and canonical projections.
\item The subset $\{(x,y)\in\mathbb{R}^2, x<y\}$ is in $\mathscr{A}_2$.
\item The family $\mathscr{A}_1$ consists of all finite unions of points and open intervals of $\mathbb{R}$.
\item Every $\mathscr{A}_n$ contains all the algebraic subsets of $\mathbb{R}^n$.
\end{enumerate}
Given an o-minimal structure, we will say that a set $A\subset \mathbb{R}^n$ is \textit{definable} if $A\in \mathscr{A}_n$.

A {\em generalized simplicial complex} is a finite collection $K $ of open simplices in $\mathbb{R}^n$. We illustrate a generalized simplicial subcomplex in Figure \ref{fig:generalized_simplicial_complex}, where, as we will do along the paper, by abuse of notation, we will denote by $X$ both the geometric realization and the underlying  simplicial complex. In the rest of this paper, all the complexes will be finite.
\begin{figure}[h!]
    \centering
\includegraphics[width=0.25\linewidth]{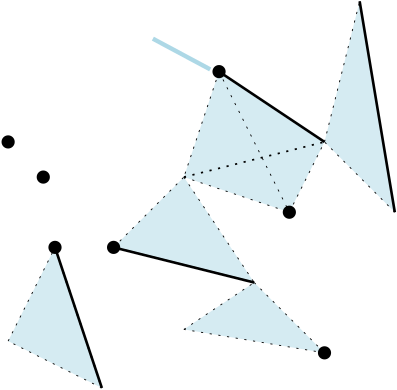}
    \caption{Generalized simplicial complex.}
\label{fig:generalized_simplicial_complex}
\end{figure}

We will use o-minimal structures that contain the semi-linear sets so that, in this way, the generalised simplicial complexes are definable. Moreover, we have the following triangulation theorem:

\begin{theorem}[{Definable triangulation theorem \cite{Dries}}] \label{thm:triangulation} 
	Let $X\subset \mathbb{R}^n$ be a definable set and let $\{X_i\}_{i=1}^{m}$ be a finite family of definable subsets of $X$. Then there exists a definable triangulation of $X$ compatible with the collection of subsets.
\end{theorem}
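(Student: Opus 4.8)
The plan is to prove the statement by induction on the ambient dimension $n$, using the cell decomposition theorem (itself derived from the o-minimal axioms above) as the main engine. First I would reduce to the bounded case: the map $x \mapsto x/\sqrt{1+\|x\|^2}$ is a semialgebraic, hence definable, homeomorphism of $\mathbb{R}^n$ onto the open unit ball, so I may replace $X$ and the family $\{X_i\}$ by their bounded images, triangulate those, and pull the triangulation back. I would also strengthen the inductive claim to the statement that \emph{any} finite family of bounded definable subsets of $\mathbb{R}^n$ admits a single triangulation compatible with every member of the family; this uniform form is what makes the induction close up.

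The base case $n=1$ is immediate from axiom (4): a definable subset of $\mathbb{R}$ is a finite union of points and open intervals, which one triangulates with the points as vertices and the intervals as open edges. For the inductive step, I would apply cell decomposition to partition $\mathbb{R}^n$ into finitely many cells compatible with all the $X_i$, and then, after a generic linear change of coordinates, arrange that the projection $\pi:\mathbb{R}^n \to \mathbb{R}^{n-1}$ onto the first $n-1$ coordinates is in good position with respect to every cell. Each cell is then either the graph $\{(x, f(x)) : x \in D\}$ of a continuous definable function over a base cell $D \subseteq \mathbb{R}^{n-1}$, or a band $\{(x,t) : x \in D,\ g(x) < t < h(x)\}$ lying between two such graphs. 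Applying the induction hypothesis to the family of base cells $D$ yields a compatible triangulation of $\pi(X) \subseteq \mathbb{R}^{n-1}$.

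The crux, and the step I expect to be the main obstacle, is lifting this base triangulation to $X$. Over each open simplex $\sigma$ of the base sit finitely many graph functions $f_1 < f_2 < \cdots < f_k$ together with the bands between them, and I must triangulate these pieces so that the resulting simplices agree along the faces shared by adjacent $\sigma$'s. The standard resolution is to refine the base triangulation so that each boundary function becomes affine on every simplex (replacing a vertex by its lift and coning), after which each graph is triangulated by transporting the triangulation of $\sigma$ and each band is realized as the join of the triangulated graphs bounding it; the constancy of the vertical order $f_1 < \cdots < f_k$ over $\sigma$ and the continuity of the $f_i$ guarantee that these choices match coherently on common faces. Assembling the simplices over all base simplices produces a generalized simplicial complex $K$ together with a definable homeomorphism $|K| \to X$ carrying a union of open simplices onto each $X_i$, which is precisely a compatible definable triangulation; undoing the initial homeomorphism onto the ball then handles the unbounded case.
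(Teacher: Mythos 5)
The paper does not prove this statement: it is quoted directly from van der Dries \cite{Dries}, so there is no internal proof to compare against. Measured against the actual proof in the cited source, your outline follows the canonical route essentially step for step: reduction to the bounded case via the semialgebraic homeomorphism $x\mapsto x/\sqrt{1+\|x\|^2}$, induction on the ambient dimension with the strengthened uniform statement for finite families of definable sets, cell decomposition compatible with the $X_i$, a linear change of coordinates putting the projection in good position, inductive triangulation of the base, and a lift over the graphs and bands sitting above each open simplex. You also correctly identify the coherence of the lift across shared faces as the crux.

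Two cautions on the lifting step. First, the phrase ``refine the base triangulation so that each boundary function becomes affine on every simplex'' is not literally achievable: a definable continuous function such as $t\mapsto t^2$ is not piecewise affine on any subdivision, and the theorem asserts only that $X$ is definably homeomorphic to a complex, not that it is a polyhedron. What the standard proof does---and what your parenthetical about lifted vertices and your join description of the bands suggest you actually intend---is to span affine simplices on the lifted vertices $(v,f_i(v))$ and build a fiberwise definable homeomorphism carrying each graph or band onto the corresponding affine simplex or join; the constancy of the order $f_1<\cdots<f_k$ over each open simplex is then what makes these fiberwise maps agree on common faces. Second, your sketch glosses the genuine technical heart: the $f_i$ are defined only on open cells, and in an o-minimal structure the closure of a cell need not be a union of cells, so before any lifting one must arrange that each boundary function extends continuously to the closure of its base cell and that the inductive triangulation respects the frontiers of the cells; this is precisely what the preparation lemmas in \cite{Dries} accomplish, and your appeal to a ``generic linear change of coordinates'' is where that work hides. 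As a blind reconstruction the proposal is sound in strategy and matches the reference, but these two points are where nearly all of the actual proof resides.
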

Once introduced the o-minimal structures, we can define the combinatorial Lefschetz number. This non-trivial definition and its properties were presented in \cite{M-M1}. We also recommend \cite[Section 2.2]{M-M2} for a quick lecture of the main ideas behind the combinatorial Lefschetz number. Given a simplicial complex $X$, a homeomorphism $f:X\rightarrow X$ and a definable subset $U\subset X$ which is $f$-invariant ($f(U)=U$), we denote the combinatorial Lefschetz number by $\varLambda(U,f)_X$ (for the usual Lefschetz number we will write $\varLambda(f)$).

\subsection{Axioms for the combinatorial Lefchetz number}  In this section, we characterize the combinatorial Lefschetz number (see \cite{M-M1, M-M2} for the definition) from an axiomatic point of view that will allow us to obtain a generalization: the combinatorial fixed point index. This may be seen as an extension of Arkowitz and Brown's work (\cite{A-B}) to non-necessarily compact settings. Note that the most important difference between the Lefschetz number and the combinatorial Lefchetz number is that the second one is additive even if the spaces are not compact (see \cite[Proposition 4.4]{M-M1}):

\begin{proposition}[Additivity of the combinatorial Lefschetz number]\label{lemma:additivity_combinatorial_lefschetz_number}
    Let $X$ be a definable cellular complex and let $U$ and $V$ be two disjoint definable and $f$-invariant subsets. Then, 
    \begin{equation*}
    \varLambda( U\cup V,f)_X=\varLambda(U,f)_X+\varLambda(V,f)_X.
    \end{equation*}
\end{proposition}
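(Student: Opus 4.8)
The plan is to reduce the equality to a cell-by-cell identity by computing all three combinatorial Lefschetz numbers on a single triangulation adapted to the decomposition. First I would observe that $U\cup V$ is again definable and $f$-invariant: definability is preserved under finite unions, and $f(U\cup V)=f(U)\cup f(V)=U\cup V$ since each of $U$, $V$ is invariant. Thus all three quantities $\varLambda(U\cup V,f)_X$, $\varLambda(U,f)_X$ and $\varLambda(V,f)_X$ are defined and it makes sense to compare them.

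Next I would invoke the definable triangulation theorem (Theorem~\ref{thm:triangulation}) applied to $X$ together with the finite family $\{U,V\}$, producing a single triangulation of $X$ compatible with both subsets simultaneously. Compatibility means that every open cell of the triangulation is contained in exactly one of $U$, $V$, or $X\setminus(U\cup V)$. Here the disjointness hypothesis $U\cap V=\varnothing$ is essential: it guarantees that no cell meets both $U$ and $V$, so that the cells contained in $U\cup V$ partition cleanly as those contained in $U$ together with those contained in $V$, with no shared cells and hence no inclusion--exclusion correction term.

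I would then unwind the definition of the combinatorial Lefschetz number from \cite{M-M1}: on such an adapted triangulation it is expressed as a signed sum of local contributions ranging over the fixed cells contained in the relevant subspace, the contribution of each fixed cell $\sigma$ being an intrinsic local index of $f$ at $\sigma$, determined by $f$ and $\sigma$ alone and not by the ambient subspace. Granting this, the defining sum for $\varLambda(U\cup V,f)_X$ runs over the fixed cells contained in $U\cup V$; by the partition of the previous step this index set is the disjoint union of the fixed cells in $U$ and those in $V$, and since each individual summand is unchanged, the sum splits as $\varLambda(U,f)_X+\varLambda(V,f)_X$, which is the asserted identity.

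The main obstacle is not the combinatorics of the splitting---which is immediate once a common adapted triangulation has been fixed---but the two structural facts about the definition that make the splitting legitimate: that $\varLambda(\,\cdot\,,f)_X$ is independent of the choice of compatible triangulation, so that the same triangulation may be used to evaluate all three numbers, and that the per-cell contribution is genuinely local in the sense above. Both are part of the development in \cite{M-M1}; with them in hand the disjoint case follows directly, whereas the absence of a correction term is exactly what distinguishes it from the situation of overlapping subsets.
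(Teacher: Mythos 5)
The paper itself does not prove this proposition: it is imported directly from \cite[Proposition 4.4]{M-M1}, so there is no in-paper argument to compare against, and your proposal has to be judged against the definition of $\varLambda(\,\cdot\,,f)_X$ given there. Your overall plan---apply Theorem~\ref{thm:triangulation} to the family $\{U,V\}$ to get one triangulation compatible with both sets at once, use disjointness to see that the open simplices inside $U\cup V$ partition into those inside $U$ and those inside $V$, and invoke triangulation-independence (\cite[Theorem~3.9~(b)]{M-M1}) so that all three numbers may be computed on this common triangulation---is the natural and correct reduction, and it is essentially the skeleton of the cited proof.

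The genuine weak point is your description of the local mechanism. Since $f$ is only a homeomorphism, no choice of triangulation makes it simplicial, so there are no literal ``fixed cells,'' and the combinatorial Lefschetz number is not a sum of intrinsic indices attached to cells fixed by $f$. In \cite{M-M1} the number arises as an alternating sum of traces of chain maps, built from simplicial approximations of $f$ on subdivisions, restricted to the subgroups of chains spanned by the open simplices lying in the given subspace. Additivity then holds not because the summands are indexed by $f$-fixed cells, but because a trace only registers diagonal entries with respect to the simplex basis: the basis for the chains supported in $U\cup V$ is the disjoint union of the bases for $U$ and for $V$, so the trace splits across the partition even though the chain map need not preserve this direct-sum decomposition (a simplex of $U$ may perfectly well be sent to a chain meeting $V$). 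Your phrase ``determined by $f$ and $\sigma$ alone'' is salvageable if reinterpreted as the diagonal coefficient of the basis element $\sigma$, but as written the key step is asserted rather than derived, and in the asserted form (local indices at fixed cells of a cellular map) it does not match the definition you would be unwinding. With that substitution---diagonal entries of the induced chain maps in place of fixed-cell indices---your splitting argument closes, and the rest of the proposal (invariance and well-definedness of the three quantities, the role of disjointness in eliminating any inclusion--exclusion correction) is sound.
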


Let $\mathcal{D}$ the family of triples $(X,f,U)$, where $X$ is a definable compact space, $f:X\rightarrow X$ is a homeomorphism and $U\subset X$ is a definable $f$-invariant ($f(U)=U$) subspace of $X$.

\begin{definition}\label{def clase homeo}
    We will say that two triples $(X,f,U)$ and $(Y,g,V)$ of $\mathcal{D}$ \textit{belong to the same homeomorphism class} if there exists a homeomorphism $h:X\rightarrow Y$ such that $h_{|U}:U\rightarrow V$ is a homeomorphism and the diagram
    \[\begin{tikzcd}
	X & X \\
	Y & Y
	\arrow["f", from=1-1, to=1-2]
	\arrow["h"', from=1-1, to=2-1]
	\arrow["h", from=1-2, to=2-2]
	\arrow["g", from=2-1, to=2-2]
\end{tikzcd}\]
commutes.
 The homeomorphism class of triples define a equivalence relation in $\mathcal{D}$. We will denote by $\mathcal{C}$ the quotient of $\mathcal{D}$ through this relation.
\end{definition}

\begin{theorem}\label{thm:axioms_combinatorial_lefschetz_number}
    The combinatorial Lefschetz number is the only operator $\lambda$ from $\mathcal{C}$ to the integers satisfying the following two axioms:
    \begin{enumerate}
        \item If $U$ is compact, then $\lambda(X,f,U)=\varLambda(f_{|U})$.
        \item If $U, V\subset X$ are disjoint $f$-invariant definable subsets, then
        \begin{equation*}
            \lambda(X,f,U\cup V)=\lambda(X,f,U)+\lambda(X,f,V).
        \end{equation*}
    \end{enumerate}
\end{theorem}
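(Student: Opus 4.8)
The plan is to prove the two halves of the characterization separately: that the combinatorial Lefschetz number is \emph{an} operator satisfying the two axioms (existence), and that any operator $\lambda\colon\mathcal{C}\to\mathbb{Z}$ obeying them is forced to coincide with it (uniqueness). For existence, Axiom~(2) is exactly the additivity established in Proposition~\ref{lemma:additivity_combinatorial_lefschetz_number}, while Axiom~(1) is the normalization property of the combinatorial Lefschetz number proved in \cite{M-M1}, namely that on a compact $f$-invariant subspace it reduces to the classical Lefschetz number $\varLambda(f_{|U})$. I would also record that $\varLambda(\,\cdot\,)_X$ is invariant under the homeomorphism relation of Definition~\ref{def clase homeo}, so that it indeed descends to a well-defined map on $\mathcal{C}$; this is the topological invariance of the construction.

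The core of the argument is uniqueness, which I would obtain by induction on the dimension of the invariant subspace. Given a triple $(X,f,U)\in\mathcal{D}$, the closure $\overline{U}$ is definable, closed in the compact space $X$ (hence compact), and $f$-invariant because $f$ is a homeomorphism and $f(\overline U)=\overline{f(U)}=\overline U$. Its frontier $\overline{U}\setminus U$ is again definable and $f$-invariant, and $\overline U=U\cup(\overline U\setminus U)$ is a disjoint union. Applying Axiom~(2) and then Axiom~(1) to the compact piece $\overline U$ yields
\[
\lambda(X,f,U)=\lambda(X,f,\overline U)-\lambda(X,f,\overline U\setminus U)=\varLambda(f_{|\overline U})-\lambda(X,f,\overline U\setminus U).
\]
The first term is completely determined by Axiom~(1). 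For the second, the key o-minimal input is the dimension inequality $\dim(\overline U\setminus U)<\dim U$ valid for every nonempty definable $U$, so the frontier is a strictly lower-dimensional definable $f$-invariant subspace. By the inductive hypothesis $\lambda(X,f,\overline U\setminus U)$ is already pinned down, the base cases being $U=\emptyset$ (where additivity forces $\lambda=0$) and $\dim U=0$ (where $U$ is finite, hence compact, and Axiom~(1) applies directly).

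Since this identical recursion applies verbatim to the combinatorial Lefschetz number $\varLambda(\,\cdot\,)_X$—which satisfies both axioms—the two operators obey the same recurrence with the same base cases, and therefore agree on every triple; passing to homeomorphism classes gives $\lambda=\varLambda(\,\cdot\,)_X$ on $\mathcal{C}$. I expect the main obstacle to be of a bookkeeping nature rather than a conceptual one: verifying carefully that each object produced by the recursion, namely $\overline U$ and $\overline U\setminus U$, genuinely lies in $\mathcal{D}$ (definability and compactness of closures, $f$-invariance under a homeomorphism, and well-definedness across the equivalence classes of $\mathcal{C}$), together with invoking the o-minimal frontier-dimension inequality that powers the induction. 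Notably, no construction requiring $f$ to be simplicial or to permute the cells of a triangulation is needed, which is precisely what makes the reduction to compact pieces clean.
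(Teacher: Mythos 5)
Your proposal is correct, and its core is the same as the paper's: decompose $\overline U=U\sqcup(\overline U\setminus U)$, evaluate the compact closure via Axiom~(1), recurse on the strictly lower-dimensional frontier until reaching a finite (hence compact) set, and observe that the combinatorial Lefschetz number obeys the identical recursion, forcing agreement. The one genuine difference is how the dimension drop is obtained. The paper first replaces $(X,f,U)$ by a triple $(Y,g,K)$ in the same class of $\mathcal{C}$ with $K$ a generalized subcomplex, using the definable triangulation theorem (Theorem~\ref{thm:triangulation}); the drop $\dim(\overline K\setminus K)<\dim K$ is then combinatorially evident, since $\overline K\setminus K$ consists of proper faces of simplices of $K$, and termination after at most $\dim K$ steps is explicit. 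You instead invoke the o-minimal frontier inequality $\dim(\overline U\setminus U)<\dim U$ for nonempty definable $U$ (available in \cite{Dries}) directly on the original triple, packaged as an induction on dimension. This buys a shorter argument that never passes to a triangulated representative and needs no simplicial structure at all, at the cost of importing a theorem of o-minimal dimension theory rather than staying within elementary simplicial combinatorics; your bookkeeping (definability of $\overline U$ and $\overline U\setminus U$, $f(\overline U)=\overline{f(U)}=\overline U$ for a homeomorphism, compactness of $\overline U$ in the compact $X$, well-definedness on $\mathcal{C}$) is exactly what is needed, and your base cases ($U=\emptyset$, $\dim U=0$ finite) match the paper's ``finite union of points'' endpoint. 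The existence half coincides with the paper's, citing \cite{M-M1} and Proposition~\ref{lemma:additivity_combinatorial_lefschetz_number}.
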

\begin{proof}
    The combinatorial Lefschetz number satisfies both axioms (see \cite{M-M1} for details and notice that, from \cite[Definition 3.4]{M-M1}, the combinatorial Lefschetz number will be well defined in $\mathcal{C}$ and not only in $\mathcal{D}$).    
    
    Let $U$ be a definable $f$-invariant subset of $X$, where $f:X\rightarrow X$ is a homeomorphism. Now, we must check that if $\lambda:\mathcal{C}\rightarrow\mathbb{Z}$ satisties these two axioms, then $\lambda(X,f,U)=\varLambda(U,f)_X$. Let $(Y,g,K)$ be in the class of $(X,f,U)$ in $\mathcal{C}$, where $K$ is a generalized subcomplex of $Y$ (the existence of $(Y,g,K)$ is given by Theorem~\ref{thm:triangulation}). Since $g$ is a homeomorphism and $K$ is $g$-invariant, we have that $\overline{K}$, $\overline{K}\setminus K$,... are also $g$-invariant. They are also definable because the closure of a definable set is definable \cite[Lemma~3.4, Chapter~1]{Dries} and the definable sets form an algebra of subsets.

    Since $\lambda$ satisfies the second axiom we have:
    \begin{align*}
        &\lambda(Y,g,\overline{K})=\lambda(Y,g,K)+\lambda(Y,g,\overline{K}\setminus K),\\
        &\lambda(Y,g,\overline{\overline{K}\setminus K})=\lambda(Y,g,\overline{K}\setminus K)+\lambda\bigl(Y,g,(\overline{\overline{K}\setminus K}))\setminus (\overline{K}\setminus K)\bigr),\\
        &\lambda\bigl(Y,g,\overline{(\overline{\overline{K}\setminus K)}\setminus (\overline{K}\setminus K})\bigr)=\ldots,\\
        &\ldots
    \end{align*}
    First, note that, since $\lambda$ also satisfies the second axiom, the left-hand side of each of these equalities agrees with the Lefschetz number, that is:
        \begin{align*}
        &\varLambda(g_{|\overline{K}})=\lambda(Y,g,K)+\lambda(Y,g,\overline{K}\setminus K),\\
        &\varLambda(g_{|\overline{\overline{K}\setminus K}})=\lambda(Y,g,\overline{K}\setminus K)+\lambda\bigl(Y,g,(\overline{\overline{K}\setminus K})\setminus (\overline{K}\setminus K)\bigr),\\
        &\varLambda(g_{|(\overline{\overline{\overline{K}\setminus K})\setminus (\overline{K}\setminus K)}})=\ldots,\\
        &\ldots
    \end{align*}
    Secondly, notice that, on each step, the dimension of the last term of the spaces in the right-hand side of each equality decreases at least one, so if the dimension of $K$ is $m$, after $m$ steps (or possibly less) the space of the last term of the $m^{th}$ equality must consist of a finite union of points (or the empty set). Since a finite union of points is compact, $\lambda$ must agree there with the Lefschetz number. Finally, since the combinatorial Lefschetz number also satisfies these two axioms, the same sequence of equalities applies if we replace $\lambda$ by the combinatorial Lefschetz number. The equality between $\lambda$ and the combinatorial Lefschetz number is then automatic by rising the dimension. \qedhere

\end{proof}
\subsection{The Combinatorial Fixed Point Index}
We introduce the novel notion of combinatorial fixed point index. 

\begin{definition}\label{def indice comb}
    Let $X$ be a simplicial complex, $f:X\rightarrow X$ a continuous map and $A\subset X$ a subspace such that $f$ has no fixed points in $\overline{A}\setminus \mathring{A}$. The \textit{combinatorial fixed point index} is defined as:
    \begin{equation*}
        i_c(X,f,A)=i(X,f,\mathring{A}),
    \end{equation*}
    where $i(X,f,\mathring{A})$ denotes the fixed point index \cite[Chapter IV]{Brown}.
\end{definition}

We study some properties of the combinatorial fixed point index.

\begin{proposition}\label{pto fijo indice comb}
    Let $X$ be a simplicial complex, $A\subset X$ a subspace and $f:X\rightarrow X$ a continuous map such that $f$ has no fixed points in $\overline{A}\setminus\mathring{A}$. Then, if $i_c(X,f,A)\neq 0$, $f$ has a fixed point in $\mathring{A}$.
\end{proposition}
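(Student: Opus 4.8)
The plan is to reduce the statement directly to the \emph{existence} (or \emph{solution}) property of the classical fixed point index, since by Definition~\ref{def indice comb} the combinatorial index is literally defined as $i_c(X,f,A)=i(X,f,\mathring{A})$. The classical fixed point index $i(X,f,U)$, defined for an open set $U$ on whose boundary $f$ has no fixed points, satisfies the property that a nonzero value forces the existence of a fixed point inside $U$ (see \cite[Chapter IV]{Brown}). Thus the whole argument amounts to checking that the hypotheses of the proposition match those under which this property applies, and then invoking it.

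First I would verify that $i(X,f,\mathring{A})$ is actually well defined under the stated hypothesis. The set $\mathring{A}$ is open, and its topological boundary satisfies $\partial\mathring{A}=\overline{\mathring{A}}\setminus\mathring{A}\subseteq\overline{A}\setminus\mathring{A}$, the inclusion following from $\mathring{A}\subseteq A$ (whence $\overline{\mathring{A}}\subseteq\overline{A}$). Since by assumption $f$ has no fixed points in $\overline{A}\setminus\mathring{A}$, it has none on $\partial\mathring{A}$ either; hence the classical index $i(X,f,\mathring{A})$ is defined, and $i_c(X,f,A)$ makes sense, consistently with Definition~\ref{def indice comb}.

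Next, assuming $i_c(X,f,A)\neq 0$, the equality $i_c(X,f,A)=i(X,f,\mathring{A})$ gives $i(X,f,\mathring{A})\neq 0$. Applying the existence property of the classical index to the open set $\mathring{A}$ then yields a fixed point of $f$ in $\mathring{A}$, which is exactly the desired conclusion.

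I do not anticipate any genuine obstacle: the result is essentially a translation of a standard property of the classical index through the defining equation of $i_c$. The only point requiring a small verification is the boundary inclusion $\partial\mathring{A}\subseteq\overline{A}\setminus\mathring{A}$, which guarantees that the no-fixed-point hypothesis is precisely what is needed for the classical index---and therefore $i_c$---to be defined in the first place.
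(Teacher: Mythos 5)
Your proposal is correct and follows essentially the same route as the paper: unwind Definition~\ref{def indice comb} to get $i(X,f,\mathring{A})\neq 0$ and invoke the existence property of the classical fixed point index (\cite[IV.A Corollary 1]{Brown}). Your extra verification that $\partial\mathring{A}\subseteq\overline{A}\setminus\mathring{A}$, so that the classical index is defined, is a sound detail the paper leaves implicit.
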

\begin{proof}
    Since $i_c(X,f,A)=i(X,f,\mathring{A})\neq 0$, by \cite[IV.A Corollary 1]{Brown}, $f$ must have a fixed point in $\mathring{A}$.
\end{proof}
\begin{remark}
    Proposition \ref{pto fijo indice comb} provides, due to Theorem~\ref{igualdad indice numero combinatorio}, another proof of \cite[Theorem 4.1]{M-M1} when the boundary of $A$ is fixed point free.
\end{remark}
The combinatorial index satisfies the four first axioms of the index (see \cite[IV.A]{Brown}). The fifth one (the Commutative Axiom) \cite[IV.A]{Brown} is only satisfied on open sets. Recall the definition of $\mathcal{C}$ from Definition \ref{def clase homeo}.

\begin{theorem}
     The combinatorial index is the only operator $\lambda:\mathcal{C}\rightarrow \mathbb{Z}$ which satisfies the following axioms:
     \begin{enumerate}
         \item[(CI1)] Localization Axiom: If $f,g:X\rightarrow X$ are maps such that they have no fixed point in $\overline{A}\setminus\mathring{A}$ and such that $f(x)=g(x)$ for all $x\in \overline{A}$, then $i_c(X,f,A)=i_c(X,g,A)$.
         \item[(CI2)] Homotopy Axiom: If $F: X\times I\rightarrow X$ is a homotopy between $f_0$ and $f_1$ without fixed points in $\overline{A}\setminus \mathring{A}$, then $i_c(X,f_0,A)=i_c(X,f_1,A).$
         \item[(CI3)] Additivity Axiom: If $A$ is a subspace such that $f$ doesn't have fixed points in $\overline{A}\setminus \mathring{A}$ and $A_1,\ldots, A_s\subset A$ are disjoint subspaces such that all the fixed points of $f$ in $\overline{A}$ are in $\mathring{A}_1\cup\ldots\cup \mathring{A}_s$ and $f$ doesn't have any fixed point in $\overline{A_j}\setminus \mathring{A}_j$ for all $j=1,\ldots, s$, then
        $i_c(X,f,A)=\sum_j i_c(X,f,A_j).$
         \item[(CI4)] Normalization Axiom: $i_c(X,f,X)=\varLambda(f).$
         \item[(CI5)] Commutativity Axiom: If $X$ and $Y$ are simplicial complexes, $A\subset X$ is an open subspace of $X$ and $f: X\rightarrow Y$ and $g: Y \rightarrow X$ are maps such that $g\circ f$ does not have any fixed point in $\overline{A}\setminus A=\overline{A}\setminus\mathring{A}$, then $i_c(X,g\circ f,A)=i_c(Y,f\circ g, g^{-1}(A)).$
     \end{enumerate}
\end{theorem}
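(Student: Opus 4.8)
The plan is to argue in two stages: first that the combinatorial index $i_c$ satisfies (CI1)--(CI5), and then that these five axioms force any operator $\lambda$ to coincide with $i_c$. For the existence half, each axiom is deduced from the corresponding classical property of the fixed point index $i(X,f,\mathring A)$ (see \cite[IV.A]{Brown}), using throughout that $\partial\mathring A=\overline{\mathring A}\setminus\mathring A\subseteq\overline A\setminus\mathring A$, so that the hypothesis ``$f$ has no fixed point on $\overline A\setminus\mathring A$'' already guarantees $i(X,f,\mathring A)$ is defined. Thus (CI1) follows because the classical index depends only on $f|_{\overline{\mathring A}}$; (CI2) from classical homotopy invariance applied to $\mathring A$ (the homotopy being fixed-point free on $\partial\mathring A$); (CI3) from classical additivity, after noting that the $\mathring A_j$ are pairwise disjoint open sets covering all fixed points of $f$ in $\mathring A$; and (CI4) from normalization, since $i_c(X,f,X)=i(X,f,X)=\varLambda(f)$. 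For (CI5) one uses that $A$ is open, so $\mathring A=A$ and $g^{-1}(A)$ is open, i.e. $\mathring{g^{-1}(A)}=g^{-1}(A)$; the statement then reduces verbatim to the classical commutativity axiom on open sets.

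For uniqueness, the key observation is that the axioms let us collapse every triple onto an open one. Given $(X,f,A)$ with $f$ fixed-point free on $\overline A\setminus\mathring A$, I would apply (CI3) with the single piece $A_1=\mathring A$: since the interior operator is idempotent, $\mathring A_1=\mathring A$, so $A_1\subseteq A$, all fixed points of $f$ in $\overline A$ lie in $\mathring A_1$, and $f$ is fixed-point free on $\overline{A_1}\setminus\mathring A_1\subseteq\overline A\setminus\mathring A$. Hence
\[
\lambda(X,f,A)=\lambda(X,f,\mathring A),
\]
and the same identity holds for $i_c$. It therefore suffices to prove $\lambda=i_c$ on triples with $A$ open.

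On open subspaces $\mathring A=A$, so $i_c(X,f,A)=i(X,f,A)$, and axioms (CI1)--(CI5) become exactly the localization, homotopy, additivity, normalization and cross-space commutativity axioms that characterize the classical fixed point index. I would then invoke the classical axiomatic characterization of the index (\cite{Brown}), valid on open subsets of compact polyhedra because these are ANRs, to conclude $\lambda(X,f,A)=i(X,f,A)=i_c(X,f,A)$ for all open $A$. Combining with the reduction above yields $\lambda(X,f,A)=\lambda(X,f,\mathring A)=i_c(X,f,\mathring A)=i_c(X,f,A)$ for every admissible triple. Well-definedness of $i_c$ on the homeomorphism class $\mathcal C$ is guaranteed by topological invariance of the classical index (Theorem~\ref{thm: inv top num comb}).

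The main obstacle is precisely the open-set case, which is not self-contained: unlike the characterization of the combinatorial Lefschetz number in Theorem~\ref{thm:axioms_combinatorial_lefschetz_number}, one cannot reduce by dimension to compact point-sets and apply normalization, since (CI4) only pins down the value on the whole space $X$. The essential work is hidden in the classical uniqueness of the index, where commutativity is used to pass through an embedding $X\hookrightarrow\mathbb R^N$ and a neighborhood retraction, reducing local computations to Euclidean space; this is exactly why (CI5) is indispensable and why it must be phrased across two different spaces $X$ and $Y$. Care is also needed to confirm the set-theoretic inclusions $\partial\mathring A\subseteq\overline A\setminus\mathring A$ and $\overline{A_j}\setminus\mathring A_j\subseteq\overline A\setminus\mathring A$, which keep every invocation of the classical index legitimate.
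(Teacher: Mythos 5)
Your proposal is correct and follows essentially the same route as the paper: existence is deduced axiom-by-axiom from the classical properties of $i(X,f,\mathring{A})$, and uniqueness is obtained by using the Additivity Axiom to collapse $\lambda(X,f,A)$ to $\lambda(X,f,\mathring{A})$ and then invoking the classical axiomatic characterization of the fixed point index on open sets from \cite[Chapter IV]{Brown}. Your explicit verification of the inclusions $\overline{\mathring{A}}\setminus\mathring{A}\subseteq\overline{A}\setminus\mathring{A}$ and of the hypotheses of (CI3) for the single piece $A_1=\mathring{A}$ merely makes precise what the paper leaves implicit.
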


\begin{proof}
We begin by proving that the combinatorial index satisfies the axioms. The idea is that the usual fixed point index satisfies its corresponding axioms \cite[IV.A]{Brown} and that the combinatorial index of $(X,f,A)$ is the usual index of $(X,f,\mathring{A})$. Let us see, for instance, CI1 (the other axioms follow analogously).

        [(CI1)] If both maps, $f$ and $g$, agree in the closure of the interior of $A$, hence
        \begin{equation*}
            i_c(X,f,A)=i(X,f,\mathring{A})=i(X,g,\mathring{A})=i_c(X,g,A).
        \end{equation*}
    It remains to check that if an operator $\lambda:\mathcal{C}\rightarrow \mathbb{Z}$ satisfies the axioms, then it is the combinatorial index.  
    If $\lambda$ satisfies the axioms, then, by \cite[Chapter IV]{Brown}, $\lambda(X,f,U)=i(X,f,U)$ if $U$ is open in $X$. Let $(X,f,A)\in \mathcal{C}$. Since $f$ has no fixed points in $\overline{A}\setminus\mathring{A}$, by the Additivity Axiom,
    \begin{equation*}
    \lambda(X,f,A)=\lambda(X,f,\mathring{A})=i(X,f,\mathring{A})=i_c(X,f,A). \qedhere
    \end{equation*}
\end{proof}

We prove that under certain hypotheses, the combinatorial Lefschetz number is the combinatorial index.

\begin{theorem}\label{igualdad indice numero combinatorio}
    Let $X$ be a simplicial complex, $f:X\rightarrow X$ a homeomorphism and $A\subset X$ a definable $f$-invariant subspace such that $f$ has no fixed points in $\overline{A}\setminus\mathring{A}$. Then
    \begin{equation*}
        \varLambda(A,f)_X=i_c(X,f,A).
    \end{equation*}
\end{theorem}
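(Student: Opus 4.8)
The plan is to reduce the statement to the compact case and there identify both quantities with the classical Lefschetz number of the restriction $f_{|\overline{A}}$. Throughout I use that $X$ is a finite complex, hence compact, and that since $f$ is a homeomorphism the sets $\mathring{A}$, $A$, $\overline{A}$ are all definable and $f$-invariant, with $f(\mathring{A})=\mathring{A}$ and $f(\overline{A})=\overline{A}$; I also use freely that finite complexes and definable sets are ENRs, so the classical fixed point index of \cite[Chapter IV]{Brown} and its five axioms apply.

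First I would record a vanishing lemma: if $S$ is definable and $f$-invariant and $f$ has no fixed point on $\overline{S}$, then $\varLambda(S,f)_X=0$. This follows by induction on $\dim S$. By additivity (Proposition~\ref{lemma:additivity_combinatorial_lefschetz_number}), $\varLambda(\overline{S},f)_X=\varLambda(S,f)_X+\varLambda(\overline{S}\setminus S,f)_X$; the left-hand side equals $\varLambda(f_{|\overline{S}})$ by axiom~(1) of Theorem~\ref{thm:axioms_combinatorial_lefschetz_number} and vanishes by the Lefschetz fixed point theorem, since $f_{|\overline{S}}$ is fixed-point-free; and the o-minimal frontier inequality $\dim(\overline{S}\setminus S)<\dim S$ lets the inductive hypothesis annihilate the remaining term (its closure sits inside $\overline{S}$, hence is still fixed-point-free). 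Applying this lemma to $S=\overline{A}\setminus A\subset\overline{A}\setminus\mathring{A}$ and using additivity once more yields $\varLambda(A,f)_X=\varLambda(\overline{A},f)_X$.

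Next I would treat the compact set $\overline{A}$. The boundary hypothesis passes to it, since $\overline{A}\setminus\mathring{\overline{A}}\subset\overline{A}\setminus\mathring{A}$ is fixed-point-free, so $i_c(X,f,\overline{A})$ is defined. On the combinatorial side, axiom~(1) gives $\varLambda(\overline{A},f)_X=\varLambda(f_{|\overline{A}})$. On the index side I would compute $i_c(X,f,\overline{A})=i(X,f,\mathring{\overline{A}})$ and transport this index from the ambient space $X$ to the ambient space $\overline{A}$ via the Commutativity Axiom of \cite[IV.A]{Brown}, factoring $f|_{\mathring{\overline{A}}}$ as the inclusion $\overline{A}\hookrightarrow X$ composed with $f|_{\mathring{\overline{A}}}\colon \mathring{\overline{A}}\to\overline{A}$; this identifies $i(X,f,\mathring{\overline{A}})$ with $i(\overline{A},f_{|\overline{A}},\mathring{\overline{A}})$. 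Since all fixed points of $f_{|\overline{A}}$ lie in $\mathring{A}\subset\mathring{\overline{A}}$, localization gives $i(\overline{A},f_{|\overline{A}},\mathring{\overline{A}})=i(\overline{A},f_{|\overline{A}},\overline{A})$, and normalization gives $\varLambda(f_{|\overline{A}})$. Hence $i_c(X,f,\overline{A})=\varLambda(\overline{A},f)_X$.

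Finally I would descend from $\overline{A}$ back to $A$ on the index side: because $\mathring{\overline{A}}\setminus\mathring{A}\subset\overline{A}\setminus\mathring{A}$ is fixed-point-free, every fixed point of $f$ in $\mathring{\overline{A}}$ already lies in $\mathring{A}$, so the Localization Axiom of the classical index gives $i(X,f,\mathring{\overline{A}})=i(X,f,\mathring{A})$, that is, $i_c(X,f,\overline{A})=i_c(X,f,A)$. Chaining the three identities produces $\varLambda(A,f)_X=\varLambda(\overline{A},f)_X=i_c(X,f,\overline{A})=i_c(X,f,A)$. I expect the main obstacle to be the compact-case comparison: one must carefully justify the change of ambient space for the classical index through commutativity, checking compactness of the relevant fixed point set and the ENR hypotheses, since the identity $i(X,f,\mathring{\overline{A}})=\Lambda(f_{|\overline{A}})$ is precisely where the boundary-fixed-point-free hypothesis and the fine structure of the index are essential.
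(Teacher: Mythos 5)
Your proof is correct in substance and follows the same skeleton as the paper's argument: reduce to the compact set $\overline{A}$, show $\varLambda(A,f)_X=\varLambda(f_{|\overline{A}})$ on the combinatorial side via additivity plus vanishing on the frontier, and identify $\varLambda(f_{|\overline{A}})$ with the classical index of $f$ on the interior via normalization, excision, and a change of ambient space. The differences are in the justifications. First, where the paper simply cites the fixed point theorem for the combinatorial Lefschetz number \cite[Theorem 4.1]{M-M1} to obtain $\varLambda(\overline{A}\setminus A,f)_X=0$, you re-prove that vanishing as a lemma by induction on dimension using the o-minimal frontier inequality $\dim(\overline{S}\setminus S)<\dim S$; this is valid and makes the argument self-contained, mirroring the descending-dimension device already used in the paper's proof of Theorem~\ref{thm:axioms_combinatorial_lefschetz_number}. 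Second, for the change of ambient space the paper invokes the topological invariance of the index \cite[Corollary 7.2]{ONeill} to pass from $i(\overline{A},f_{|\overline{A}},\mathring{A})$ to $i(X,f,\mathring{A})$, whereas you factor through the inclusion $\overline{A}\hookrightarrow X$ and appeal to the Commutativity Axiom. As literally stated in \cite[IV.A]{Brown}, commutativity requires two globally defined maps $X\rightarrow Y$ and $Y\rightarrow X$; your second factor, the corestriction $f|\colon\mathring{\overline{A}}\rightarrow\overline{A}$, is defined only on $\mathring{\overline{A}}$, and no globally defined map $X\rightarrow\overline{A}$ is available. So this step needs either Dold's commutativity for partially defined maps between ENRs --- which does apply here, since $\mathrm{Fix}(f)\cap\overline{A}$ is compact and contained in $\mathring{A}$ --- or the O'Neill citation the paper uses; you flagged exactly this as the delicate point, and it is a citation-level repair rather than a missing idea. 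Finally, a terminological slip: the two steps you attribute to the Localization Axiom, namely $i(\overline{A},f_{|\overline{A}},\mathring{\overline{A}})=i(\overline{A},f_{|\overline{A}},\overline{A})$ and $i(X,f,\mathring{\overline{A}})=i(X,f,\mathring{A})$, are instances of the Additivity (excision) Axiom, not Localization, which concerns two maps agreeing on $\overline{A}$; your harmless detour through $\mathring{\overline{A}}$ is the only place the proofs' bookkeeping diverges, as the paper works directly with $\mathring{A}$.
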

\begin{proof}
    We have the chain of equalities:
    \begin{equation*}
        \varLambda(f_{|\overline{A}})=i(\overline{A},f_{|\overline{A}},\overline{A})=i(\overline{A},f_{|\overline{A}},\mathring{A})=i(X,f,\mathring{A})=i_c(X,f,A),
    \end{equation*}
    where the first equality comes from the Normalization Axiom of the index, the second one comes from the Additivity Axiom and the third one comes from the topological invariance of the fixed point index \cite[Corllary 7.2]{ONeill}. The fourth equality is just the definition of the combinatorial index.

    By Theorem \ref{thm:axioms_combinatorial_lefschetz_number} (1) and the additivity of the combinatorial Lefschetz number (Theorem \ref{thm:axioms_combinatorial_lefschetz_number} (2)), we have:
    \begin{equation}\label{igualdad numero comb y normal_p}
        \varLambda(f_{\overline{A}})=\varLambda(\overline{A},f_{\overline{A}})_X=\varLambda(A,f)_X+\varLambda(\overline{A}\setminus A, f)_X.
    \end{equation}
    Since $f$ has not fixed points in $\overline{A}\setminus\mathring{A}$, then it has not fixed points in $\overline{\overline{A}\setminus A}$. Therefore, the fixed point theorem of the combinatorial Lefschetz number \cite[Theorem 4.1]{M-M1} implies $\varLambda(\overline{A}\setminus A,f)_X=0$ and as a consequence, from Equation \ref{igualdad numero comb y normal_p} we obtain:
    \begin{equation}\label{igualdad numero comb y normal}
        \varLambda(f_{\overline{A}})=\varLambda(\overline{A},f_{\overline{A}})_X=\varLambda(A,f)_X. \qedhere
    \end{equation}
\end{proof}

Let us present now some local computation and homotopy invariance properties of the combinatorial Lefschetz number that are consequences of Theorem~\ref{igualdad indice numero combinatorio}.

\begin{corollary}
    Let $X$ be a simplicial complex, $f,g:X\rightarrow X$ homeomorphisms and $A\subset X$ an $f$-invariant and $g$-invariant definable subspace such that $f$ and $g$ do not have any fixed points in $\overline{A}\setminus\mathring{A}$.
    \begin{enumerate}
        \item[(1)] \label{cor igual num comb y normal} The combinatorial Lefschetz number of $f$ in $A$ equals the Lefschetz number of $f_{|\overline{A}}$ and equals $\varLambda(\mathring{A},f)_X$, i.e: $$\varLambda(f_{\overline{A}})=\varLambda(A,f)_X=\varLambda(\mathring{A},f)_X$$
        \item[(2)] \label{inv homotopica} If $f_{|\overline{A}}$ and $g_{|\overline{A}}$ are homotopic, then 
    \begin{equation*}
    \varLambda(A,f)_X=\varLambda(A,g)_X.
    \end{equation*}
        \item[(3)] \label{inv homotopica} If $F: X\times I\rightarrow X$ is a homotopy between $f$ and $g$ without fixed points in $\overline{A}\setminus\mathring{A}$. Then
    \begin{equation*}
    \varLambda(A,f)_X=\varLambda(A,g)_X.
    \end{equation*}
    \end{enumerate}
\end{corollary}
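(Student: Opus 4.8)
The plan is to deduce every part from Theorem~\ref{igualdad indice numero combinatorio}, whose proof establishes, for any homeomorphism $f$ and any definable $f$-invariant subspace whose topological boundary carries no fixed point of $f$, the chain
\[
\varLambda(f_{|\overline{A}})=\varLambda(A,f)_X=i_c(X,f,A)=i(X,f,\mathring{A}).
\]
Once this chain is in hand, each of the three assertions reduces to reapplying it together with a standard invariance property.

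For (1), the equality $\varLambda(f_{|\overline{A}})=\varLambda(A,f)_X$ is exactly what is recorded in the proof of Theorem~\ref{igualdad indice numero combinatorio}. To reach $\varLambda(A,f)_X=\varLambda(\mathring{A},f)_X$, I would verify that $\mathring{A}$ satisfies the same hypotheses as $A$. It is definable, since $\mathring{A}=X\setminus\overline{X\setminus A}$ and both complementation (definable sets form an algebra) and closure \cite[Lemma~3.4, Chapter~1]{Dries} preserve definability. It is $f$-invariant, because a homeomorphism carries the interior of $A$ onto the interior of $f(A)=A$, so that $f(\mathring{A})=\mathring{A}$. Finally $f$ has no fixed point on $\overline{\mathring{A}}\setminus\mathring{A}$, by means of the inclusion $\overline{\mathring{A}}\setminus\mathring{A}\subset\overline{A}\setminus\mathring{A}$, which follows from $\mathring{A}\subset A$ and hence $\overline{\mathring{A}}\subset\overline{A}$. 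Applying Theorem~\ref{igualdad indice numero combinatorio} to $\mathring{A}$, which is open so that its interior is itself, gives $\varLambda(\mathring{A},f)_X=i(X,f,\mathring{A})$, and the same theorem applied to $A$ gives $\varLambda(A,f)_X=i(X,f,\mathring{A})$; the two values therefore coincide.

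For (2), since $f$ is a homeomorphism with $f(A)=A$ one has $f(\overline{A})=\overline{A}$, and likewise for $g$, so $f_{|\overline{A}}$ and $g_{|\overline{A}}$ are genuine self-maps of $\overline{A}$. By Theorem~\ref{igualdad indice numero combinatorio} applied to $f$ and to $g$ it then suffices to prove $\varLambda(f_{|\overline{A}})=\varLambda(g_{|\overline{A}})$, which is the classical homotopy invariance of the Lefschetz number: a homotopy $f_{|\overline{A}}\simeq g_{|\overline{A}}$ induces the same endomorphism on rational homology, so the alternating sums of traces agree.

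For (3), I would argue on the index side. By Theorem~\ref{igualdad indice numero combinatorio} we have $\varLambda(A,f)_X=i(X,f,\mathring{A})$ and $\varLambda(A,g)_X=i(X,g,\mathring{A})$. The hypothesis that $F$ has no fixed point in $\overline{A}\setminus\mathring{A}$ at any time, combined once more with $\overline{\mathring{A}}\setminus\mathring{A}\subset\overline{A}\setminus\mathring{A}$, shows that $F$ restricts to a homotopy on the open set $\mathring{A}$ free of fixed points on its boundary, so the Homotopy Axiom of the fixed point index \cite[IV.A]{Brown} yields $i(X,f,\mathring{A})=i(X,g,\mathring{A})$ and hence the claim. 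The computations are short throughout; the only points requiring care are the stability of the hypotheses when passing from $A$ to $\mathring{A}$ and the boundary inclusion $\overline{\mathring{A}}\setminus\mathring{A}\subset\overline{A}\setminus\mathring{A}$, which is precisely what legitimizes both the definability and invariance bookkeeping in (1) and the application of the index homotopy axiom in (3). I expect checking cleanly that the interior of a definable set is definable to be the main, albeit routine, obstacle.
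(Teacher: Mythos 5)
Your proposal is correct and follows essentially the same route as the paper: all three parts are reduced to the chain of equalities established in Theorem~\ref{igualdad indice numero combinatorio}, with (2) handled by classical homotopy invariance of the Lefschetz number and (3) by the homotopy axiom of the index applied on $\mathring{A}$ (which is exactly what the paper's appeal to the Homotopy Axiom CI2 of the combinatorial index amounts to, unwound by one definition). If anything, you make explicit some bookkeeping the paper leaves implicit—the inclusion $\overline{\mathring{A}}\setminus\mathring{A}\subset\overline{A}\setminus\mathring{A}$, the $f$-invariance and definability of $\mathring{A}$ via $\mathring{A}=X\setminus\overline{X\setminus A}$—which is needed to legitimately apply the theorem to $\mathring{A}$ and write $i_c(X,f,\mathring{A})$.
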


\begin{proof}
\begin{enumerate}
    \item[(1)] The first equality follows from Equation (\ref{igualdad numero comb y normal}). For the second, since $f$ is a homeomorphism and $A$ is $f$-invariant, so is $\mathring{A}$. Moreover, $\mathring{A}$ is definable by \cite[Lemma (3.4)]{Dries}. Therefore, $\varLambda(\mathring{A},f)_X$ is defined.  From Theorem~\ref{igualdad indice numero combinatorio} it follows that
    \begin{equation*}
    \varLambda(A,f)_X=i_c(X,f,A)=i(X,f,\mathring{A})=i_c(X,f,\mathring{A})=\varLambda(\mathring{A},f)_X.
    \end{equation*}
    \item[(2)]   From Corollary~\ref{cor igual num comb y normal} (1), we have $\varLambda(A,f)_X=\varLambda(f_{|\overline{A}})$ and $\varLambda(A,g)_X=\varLambda(g_{|\overline{A}})$. The result follows then from the homotopic invariance of the Lefschetz number. 
    \item[(3)]  The result follows from Theorem~\ref{igualdad indice numero combinatorio} and from the Homotopy Axiom of the combinatorial fixed point index. \qedhere
\end{enumerate}
    
\end{proof}


\subsection{Topological invariance of the combinatorial Lefschetz number}
From the definition of combinatorial Lefschetz number (see \cite{M-M1}) follows a first topological invariance result. It is a consequence of \cite[Definition 3.4]{M-M1}, since the triangulation of $(X,\overline{U},U)$ is also a triangulation of $(Y,\overline{V},V)$ and because the definition of combinatorial Lefschetz number does not depend on the choice of the triangulation (\cite[Theorem 3.9 (b)]{M-M1}).
\begin{theorem}\label{primera inv top}
    Let $X$, $Y$ be definable compact spaces, $f:X\rightarrow X$, $g:Y\rightarrow Y$ homeomorphisms and $U\subset X$, $V\subset Y$ definable $f$-invariant (resp. $g$-invariant) subspaces of $X$ (resp. $Y$). With the terminology of Definition~\ref{def clase homeo}, if $(X,f,U)$ and $(Y,g,V)$ belong to the same homeomorphism class, then $\varLambda(U,f)_X=\varLambda(V,g)_Y$.
\end{theorem}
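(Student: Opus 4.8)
The plan is to follow the strategy already signalled by the construction of the combinatorial Lefschetz number: transport a compatible definable triangulation of $X$ to $Y$ through the homeomorphism provided by the hypothesis, and observe that the combinatorial data produced on the two sides are literally identical. First I would unwind what membership in the same homeomorphism class means. By Definition~\ref{def clase homeo} there is a homeomorphism $h:X\to Y$ with $h_{|U}:U\to V$ a homeomorphism and $h\circ f=g\circ h$, so that $h^{-1}\circ g\circ h=f$; moreover, since $h$ is a homeomorphism and $h(U)=V$, it also carries closures to closures, whence $h(\overline{U})=\overline{V}$.

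Next I would fix, using the definable triangulation theorem (Theorem~\ref{thm:triangulation}), a definable triangulation $t\colon|K|\to X$ compatible with the subsets $\overline{U}$ and $U$. This realizes $\overline{U}$ and $U$ as subcomplexes, say $U$ as $K_U\subset K$, and by \cite[Definition~3.4]{M-M1} the value $\varLambda(U,f)_X$ is computed from the pair $(K,K_U)$ together with the self-homeomorphism $\tilde{f}:=t^{-1}\circ f\circ t$ of $|K|$ that $f$ induces. Because $h$ sends $\overline{U}$ to $\overline{V}$ and $U$ to $V$, the composite $h\circ t\colon|K|\to Y$ is again a triangulation, now compatible with $\overline{V}$ and $V$, realizing the \emph{same} simplicial pair $(K,K_U)=(K,K_V)$.

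The heart of the argument is then the computation that the self-map induced on $|K|$ by $g$ through the transported triangulation coincides with $\tilde{f}$:
\[
(h\circ t)^{-1}\circ g\circ(h\circ t)=t^{-1}\circ(h^{-1}\circ g\circ h)\circ t=t^{-1}\circ f\circ t=\tilde{f},
\]
using $h^{-1}\circ g\circ h=f$. Hence $\varLambda(U,f)_X$ and $\varLambda(V,g)_Y$ are extracted from one and the same combinatorial datum, namely the simplicial pair $(K,K_U)$ and the induced self-homeomorphism $\tilde{f}$. Since by \cite[Theorem~3.9(b)]{M-M1} the combinatorial Lefschetz number does not depend on the chosen triangulation, I would conclude $\varLambda(U,f)_X=\varLambda(V,g)_Y$.

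The step I expect to be the main obstacle is justifying that the transported triangulation $h\circ t$ may legitimately be used to evaluate $\varLambda(V,g)_Y$, since $h$ is only assumed to be a homeomorphism and $h\circ t$ need not be a \emph{definable} triangulation of $Y$. The resolution is to stress that the combinatorial Lefschetz number is a function of the abstract simplicial pair and of the induced action alone—data that are identical on both sides—and then to invoke the triangulation-independence of \cite[Theorem~3.9(b)]{M-M1} to identify this common value with the number $\varLambda(V,g)_Y$ computed from a genuine definable triangulation of $Y$. Making precise that the defining formula depends only on this combinatorial/topological datum, and not on any further definable feature of the triangulating map, is the point where care is required.
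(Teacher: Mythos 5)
Your proposal is correct and follows essentially the same route as the paper, which likewise observes that a triangulation of $(X,\overline{U},U)$ transported through $h$ serves as a triangulation of $(Y,\overline{V},V)$ and then invokes the triangulation-independence of \cite[Theorem~3.9(b)]{M-M1}. In fact you are more careful than the paper, which passes over in silence the subtlety you flag—that $h\circ t$ need not be a \emph{definable} triangulation—and resolves it, as you do, only implicitly through the fact that \cite[Definition~3.4]{M-M1} depends solely on the simplicial pair and the induced self-map.
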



In \cite[Corollary 3.4]{M-M2}, we obtained a result on the topological invariance of the combinatorial Lefschetz number. Nevertheless, the hypotheses were very rigid. Here, we are flexibilizing the hypotheses by proving the topological invariance of the combinatorial Lefschetz number when the map does not have any fixed point in the boundary of the definable subset where we are computing the combinatorial Lefschetz number.

\begin{theorem}\label{thm: inv top num comb}
     Let $A$ and $B$ be definable sets and let $f:A\rightarrow A$ and $g:B\rightarrow B$ be homeomorphism such that there exist a definable homeomorphism $h:A\rightarrow B$ making the diagram
    \[\begin{tikzcd}
	A & A \\
	B & B
	\arrow["f", from=1-1, to=1-2]
	\arrow["h"', from=1-1, to=2-1]
	\arrow["h", from=1-2, to=2-2]
	\arrow["g", from=2-1, to=2-2]
\end{tikzcd}\]
commute. Suposse also that there exist simplicial complexes $X$ and $Y$ such that $A\subset X$, $B\subset Y$, and that $f$ and $g$ can be extended to homeomorphisms $f':X\rightarrow X$ and $g':Y\rightarrow Y$ ($X$ and $Y$ need not to be homeomorphic). Then, if $f'$ (resp. $g'$) doesn't have any fixed point in $\overline{A}\setminus\overset{\circ}{A}$ (resp. $\overline{B}\setminus\overset{\circ}{B}$) (here the closures are taken by considering $A$ and $B$ as subspaces of $X$ and $Y$), we have
    \begin{equation*}
        \varLambda(A,f')_X=\varLambda(B,g')_Y.
    \end{equation*}
\end{theorem}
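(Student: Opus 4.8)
The plan is to reduce everything to the combinatorial fixed point index, where the hypotheses are tailored to fit, and then transport the index along the homeomorphism $h$. By Theorem~\ref{igualdad indice numero combinatorio}, since $f'$ is a homeomorphism, $A$ is definable and $f'$-invariant (as $A = h^{-1}(B)$ and the commuting diagram forces $f'(A) = A$), and $f'$ has no fixed points in $\overline{A}\setminus\mathring{A}$, we have $\varLambda(A,f')_X = i_c(X,f',A)$; symmetrically $\varLambda(B,g')_Y = i_c(Y,g',B)$. So it suffices to prove the single equality
\begin{equation*}
i_c(X,f',A) = i_c(Y,g',B).
\end{equation*}

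First I would unfold both sides using Definition~\ref{def indice comb}, turning the goal into $i(X,f',\mathring{A}) = i(Y,g',\mathring{B})$, an equality between classical fixed point indices on the open sets $\mathring{A}$ and $\mathring{B}$. The key observation is that $h$ restricts to a homeomorphism $\mathring{A}\to\mathring{B}$: since $h:A\to B$ is a homeomorphism of definable spaces and interiors are a topological invariant of the subspace structure, $h(\mathring{A}) = \mathring{B}$. Moreover, the commuting square $g\circ h = h\circ f$ on $A$ restricts to $g'_{|\mathring{B}}\circ h_{|\mathring{A}} = h_{|\mathring{A}}\circ f'_{|\mathring{A}}$, so $f'_{|\mathring{A}}$ and $g'_{|\mathring{B}}$ are conjugate via the homeomorphism $h_{|\mathring{A}}$. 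I would then invoke the topological (conjugation) invariance of the classical fixed point index, exactly as cited in the proof of Theorem~\ref{igualdad indice numero combinatorio} via \cite[Corollary 7.2]{ONeill}: the index $i(X,f',\mathring{A})$ depends only on the restriction $f'_{|\mathring{A}}$ as a self-map of the open set $\mathring{A}$, and conjugate self-maps on homeomorphic open sets have equal index. This yields $i(X,f',\mathring{A}) = i(Y,g',\mathring{B})$ and closes the argument.

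The one delicate point worth making precise is that the fixed point index is genuinely intrinsic to the open subset and its self-map, not to the ambient complex $X$ or $Y$. This is important because $X$ and $Y$ are explicitly not assumed homeomorphic, so the invariance cannot be applied to the ambient spaces — only to $\mathring{A}$ and $\mathring{B}$. Concretely, I would note that $i(X,f',\mathring{A}) = i(\mathring{A}, f'_{|\mathring{A}}, \mathring{A})$ by the localization property of the index (a self-map of an open set computes its own index on its full domain), and likewise for the $Y$ side; it is on these intrinsic indices that $h_{|\mathring{A}}$ acts by conjugation.

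The main obstacle I anticipate is verifying the fixed-point-freeness hypothesis is preserved under the conjugation, i.e.\ confirming that $g'$ has no fixed points on $\overline{B}\setminus\mathring{B}$ is compatible with $h$ carrying fixed-point data of $f'$ on $\overline{A}\setminus\mathring{A}$ to that of $g'$. Since $h$ is only defined on $A$ (equivalently extends to the relevant closures through the topological conjugacy), one must check that the boundary condition for $g'$ — which is assumed separately in the statement — is consistent, rather than derived, so that both combinatorial indices are well defined at the outset. This is why the theorem hypothesizes the boundary condition for both $f'$ and $g'$ independently: it guarantees $i_c(X,f',A)$ and $i_c(Y,g',B)$ are each defined before the conjugation argument equates them, and no further compatibility needs to be extracted from $h$.
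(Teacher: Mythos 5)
Your opening reduction (via Theorem~\ref{igualdad indice numero combinatorio}, to the equality $i_c(X,f',A)=i_c(Y,g',B)$) matches the paper, but the core of your argument has a genuine gap: the claim that ``$h$ restricts to a homeomorphism $\mathring{A}\to\mathring{B}$\ldots since interiors are a topological invariant of the subspace structure'' is false. The interiors $\mathring{A}$ and $\mathring{B}$ are taken in the ambient complexes $X$ and $Y$, which are explicitly \emph{not} assumed homeomorphic, and the interior of a subspace is an invariant of the embedding, not of the subspace: a half-open arc $A$ embedded as a subset of $X=[0,1]$ has interior $(0,1)$, while a homeomorphic copy $B$ embedded in a $2$-simplex $Y$ has empty interior. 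So $h(\mathring{A})=\mathring{B}$ can fail, the conjugacy between $f'_{|\mathring{A}}$ and $g'_{|\mathring{B}}$ via $h_{|\mathring{A}}$ need not exist, and your one-step conjugation collapses. What \emph{is} true is that the compact fixed point sets correspond: $\mathrm{Fix}(f')\cap\overline{A}\subset\mathring{A}\subset A$ by hypothesis, and $h$ carries $\mathrm{Fix}(f')\cap A$ to $\mathrm{Fix}(g')\cap B$. This forces the localization step you skipped, and it is exactly what the paper does: it chooses an open set $U$ around $\mathrm{Fix}(f'_{|\overline{A}})$ with $U\cup f'_{|\overline{A}}(U)$ contained in a simplicial subcomplex $K$ of a subdivision of $\overline{A}$ whose underlying space lies in $A$, shrinks both sides to these neighborhoods by the Additivity Axiom, and only then transports the index along $h$.

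The second problem is your use of \cite[Corollary 7.2]{ONeill} as a black-box statement that ``conjugate self-maps on homeomorphic open sets have equal index,'' together with the intrinsic index $i(\mathring{A},f'_{|\mathring{A}},\mathring{A})$ of a self-map of a non-compact open set. Neither is available in the framework the paper works in: Brown's index is defined for compact polyhedra, and O'Neill's invariance is a restricted, simplicial statement --- it requires the fixed points to lie in simplicial complexes that correspond under the homeomorphism. This is precisely why the paper needs $h$ to be \emph{definable}: definability (via the Triangulation Theorem and Theorem~\ref{primera inv top}) is what lets one arrange $h(K)$ to be a subcomplex of a subdivision of $\overline{B}$ lying inside $B$, so that O'Neill's result applies. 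Your proposal never uses definability of $h$ anywhere, and would therefore prove the stronger statement for an arbitrary homeomorphism $h$ --- which the paper's own remark after this theorem flags as beyond the cited tools (``a stronger topological invariance result of the fixed point index might allow us to require $h$ only to be a homeomorphism''). Relatedly, the paragraph you identify as the ``main obstacle'' (consistency of the two boundary hypotheses) is not where the difficulty lies; the boundary conditions are indeed just assumed on each side, and the real work is the localization-plus-triangulation argument above.
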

\begin{proof}
Due to the Triangulation Theorem (Theorem~\ref{thm:triangulation}) and Theorem~\ref{primera inv top}, we can assume $A$ and $B$ are generalized subcomplexes of $X$ and $Y$. Moreover, by \cite[Remark 3.5]{M-M1}, we have $\varLambda(A,f')_X=\varLambda(A,f'_{|\overline{A}})_{\overline{A}}$ and $\varLambda(B,g')_Y=\varLambda(B,g'_{|\overline{B}})_{\overline{B}}$ (taking the closures of $A$ and $B$ in $X$ and $Y$). We must then check that $\varLambda(A,f'_{|\overline{A}})_{\overline{A}}=\varLambda(B,g'_{|\overline{B}})_{\overline{B}}$.

By Theorem~\ref{igualdad indice numero combinatorio}, we have $\varLambda(A,f'_{|\overline{A}})_{\overline{A}}=i_c(\overline{A},f'_{|\overline{A}},A)$ and $\varLambda(B,g'_{|\overline{B}})_{\overline{B}}=i_c(\overline{B},g'_{|\overline{B}},B)$.

Denote by $\mathrm{Fix}(f'_{|\overline{A}})$ the set of fixed points of $f'_{|\overline{A}}$. Now, since $f'_{|\overline{A}}$ has no fixed points in $\overline{A}\setminus \overset{\circ}{A}$, we can take a open set $U$ close enough to $\mathrm{Fix}(f'_{|\overline{A}})$ such that $U\cup f'_{|\overline{A}}(U)$ is contained in a simplicial subcomplex $K$ of a subdivision of $\overline{A}$ satisfying that the underlying space of $K$ is contained in the one of $A$.

Now, since $h$ was definable, by the Triangulation Theorem and Theorem~\ref{primera inv top}, we can assume that $h(K)$ is a simplicial subcomplex of a subdivision of $\overline{B}$ such that its underlying space is contained in the one of $B$. 

In this case, \cite[Corollary 7.2]{ONeill} applies and we have 
\begin{equation*}
    i_c(\overline{A},f'_{|\overline{A}},A)=i_c(\overline{A},f'_{|\overline{A}},U)= i_c(\overline{B},g'_{|\overline{B}},h(U))=i_c(\overline{B},g'_{|\overline{B}},B),
\end{equation*} 
where the first and the third equalities are due to the Additivity Axiom of the combinatorial fixed point index. So the result follows.
\end{proof}
\begin{remark}
    In Theorem \ref{thm: inv top num comb} the complexes $X$ and $Y$ need not to be homeomorphic (this was the main restriction of Theorem \ref{primera inv top}). In fact, the existence of $X$, $Y$, $f'$ and $g'$ is now only used to make sense the notion of combinatorial Lefschetz number.
    
    Note that $h$ is required to be definable only to guarantee that $h(K)$ can be considered as a simplicial complex. This is important because \cite[Theorem 2.5, L5]{ONeill} requires the fixed points of $f'_{|\overline{A}}$ and $g'_{|\overline{B}}$ to belong to simplicial complexes that are homeomorphic under $h$. As a consequence, a stronger topological invariance result of the fixed point index might allow us to require $h$ only to be a homeomorphism. 
\end{remark}

\section{Integration with respect to the Combinatorial Fixed Point Index}
\label{sec:integration_comb_index}

In \cite{M-M1}, an integration theory with respect to the Lefschetz number was presented. For that, we considered a homeomorphism $f:X\rightarrow X$ and we integrated maps $h:X\rightarrow\mathbb{Z}$ of the form $h=\sum_{j=1}^n d_j\mathds{1}_{U_j}$, where the $U_j$'s were $f$-invariant definable subsets of $X$. In order to compute the integral, there were two main steps to solve. The first one was to obtain the triangulation of the Triangulation Theorem, and the second one was to obtain a simplicial approximation of the homeomorphisms.

In this paper, for any $f:X\rightarrow X$ (we don't require $f$ to be a homeomorphism), we are going to integrate maps $h:X\rightarrow \mathbb{Z}$ of the form $h=\sum_{j=1}^n d_j\mathds{1}_{U_j}$ where the only condition of the $U_j$'s is that $f$ has no fixed points in $\overline{U_j}\setminus \mathring{U_j}$. This improvement will be possible since we are going to integrate with respect to the combinatorial fixed point index, which in the case where $f$ has no fixed points in $\overline{U_j}\setminus \mathring{U_j}$ is a generalization of the combinatorial Lefschetz number (Theorem~\ref{igualdad indice numero combinatorio}). Moreover, this constitues a good measure of the ``density'' and ``weight'' of the fixed points.
\begin{definition}
    Let $X$ be a simplicial complex and $f:X\rightarrow X$ a continuous map. A map $h:X\rightarrow \mathbb{Z}$ is called $f$-\textit{integrable with respect to the index} if it admits an expression of the form
    \begin{equation*}
        h=\sum_{j=i}^n d_j\mathds{1}_{U_j},
    \end{equation*}
    where $f$ has no fixed points in $\overline{U_j}\setminus \mathring{U_j}$ for each $j\in\{1,\ldots, n\}$.
\end{definition}
\begin{definition}
    Let $X$ be a simplicial complex, $f:X\rightarrow X$ a continuous map and let $h=\sum_{j=i}^n d_j\mathds{1}_{U_j}:X\rightarrow\mathbb{Z}$ be $f$-integrable with respect to the index. We define the integral of $h$ \textit{with respect to} \textit{the combinatorial fixed point index of} $f$ as
    \begin{equation*}
        \int_X h\; d\,i_c(f)= \sum_{j=1}^n d_j\cdot i_c(X,f,U_j).
    \end{equation*}
\end{definition}
We must check that this integral is well defined. The proof is inspired by \cite[Theorem 5.2]{M-M1}.
\begin{theorem}\label{buena def int}
    Let $X$ be a simplicial complex, $f:X\rightarrow X$ a continuous map and let $h=\sum_{j=i}^n d_j\mathds{1}_{U_j}=\sum_{k=i}^m e_k\mathds{1}_{V_k}$, where $f$ has no fixed points in $\overline{U_j}\setminus \mathring{U_j}$ for $j\in\{1,\ldots n\}$ nor in $\overline{V_k}\setminus V_k$ for $k\in \{1,\ldots, m\}$. Then
    \begin{equation*}
        \sum_{j=1}^n d_j\cdot i_c(X,f,U_j)=\sum_{k=1}^m e_k\cdot i_c(X,f,V_k).
    \end{equation*}
\end{theorem}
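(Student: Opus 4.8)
The plan is to prove independence of the chosen representation by passing to a common refinement into the atoms of the Boolean algebra generated by all the sets involved, and then invoking the Additivity Axiom (CI3) of the combinatorial fixed point index. Concretely, let $\{W_1,\dots,W_r\}$ be the collection of nonempty sets of the form
\[
\bigcap_{j\in S}U_j\cap\bigcap_{j\notin S}(X\setminus U_j)\cap\bigcap_{k\in T}V_k\cap\bigcap_{k\notin T}(X\setminus V_k),
\]
with $S\subseteq\{1,\dots,n\}$ and $T\subseteq\{1,\dots,m\}$. These atoms are pairwise disjoint, and each $U_j$ and each $V_k$ is the disjoint union of those atoms it contains. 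Since $h$ is constant on each $W_i$—with value $\sum_{j:\,W_i\subseteq U_j}d_j=\sum_{k:\,W_i\subseteq V_k}e_k$, both equal to $h(x)$ for any $x\in W_i$—the well-definedness reduces to expressing each $i_c(X,f,U_j)$ and each $i_c(X,f,V_k)$ as a sum of the quantities $i_c(X,f,W_i)$.

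First I would check that $f$ has no fixed points on the boundary of any atom. Using the standard inclusions $\partial(A\cap B)\subseteq\partial A\cup\partial B$ and $\partial(X\setminus A)=\partial A$, one obtains $\overline{W_i}\setminus\mathring{W_i}\subseteq\bigcup_j(\overline{U_j}\setminus\mathring{U_j})\cup\bigcup_k(\overline{V_k}\setminus\mathring{V_k})$, and the hypotheses guarantee that $f$ has no fixed point on the right-hand side. Next I would verify the three conditions needed to apply (CI3) to $A=U_j$ with the pieces ranging over the atoms contained in $U_j$: the atoms are disjoint; $f$ has no fixed point in $\overline{U_j}\setminus\mathring{U_j}$ by hypothesis and none in any $\overline{W_i}\setminus\mathring{W_i}$ by the previous step; and every fixed point of $f$ in $\overline{U_j}$ lies in $\mathring{U_j}$, hence in some atom $W_i\subseteq U_j$, and in fact in $\mathring{W_i}$, since it avoids $\overline{W_i}\setminus\mathring{W_i}$. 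Thus (CI3) yields $i_c(X,f,U_j)=\sum_{W_i\subseteq U_j}i_c(X,f,W_i)$, and symmetrically $i_c(X,f,V_k)=\sum_{W_i\subseteq V_k}i_c(X,f,W_i)$.

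Finally I would substitute these expansions into both sides and reorganize the double sums by atoms:
\[
\sum_{j=1}^n d_j\,i_c(X,f,U_j)=\sum_{i=1}^r\Bigl(\sum_{j:\,W_i\subseteq U_j}d_j\Bigr)i_c(X,f,W_i)=\sum_{i=1}^r\Bigl(\sum_{k:\,W_i\subseteq V_k}e_k\Bigr)i_c(X,f,W_i)=\sum_{k=1}^m e_k\,i_c(X,f,V_k),
\]
where the middle equality uses that the two coefficient sums coincide with the common value of $h$ on $W_i$. The main obstacle is the verification that (CI3) genuinely applies to the atom decomposition—particularly confirming that every fixed point in $\overline{U_j}$ is captured in the interior of some atom and that no fixed point escapes onto an atom boundary; once the boundary computation is in place this is routine, and the empty atoms contribute nothing since $i_c(X,f,\emptyset)=i(X,f,\emptyset)=0$.
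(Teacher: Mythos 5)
Your proof is correct and follows essentially the same route as the paper's: both pass to a common disjoint refinement of the $U_j$'s and $V_k$'s (you via the atoms of the generated Boolean algebra, the paper via an equivalent iterated-difference construction), observe that the class of sets whose boundary is fixed-point free is closed under the relevant Boolean operations, apply the Additivity Axiom to expand each $i_c(X,f,U_j)$ and $i_c(X,f,V_k)$ over the pieces, and conclude by swapping sums using $\sum_{j:\,W_i\subseteq U_j}d_j=\sum_{k:\,W_i\subseteq V_k}e_k$. If anything, your version is slightly more careful than the paper's in explicitly verifying the hypotheses of (CI3) on the atoms (the boundary inclusions and that every fixed point in $\overline{U_j}$ lands in the interior of some atom), which the paper leaves implicit.
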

\begin{proof}
    We start by writing $h$ as a linear combination of characteristic maps of sets compatibles with both the $U_j$'s and the $V_k$'s.

    Let $W_\zeta=U_\zeta$ if $1\le\zeta\le n$ and $W_\zeta=V_{\zeta-n}$ if $n+1\le\zeta\le n+m$. We divide $(\cup_{j=1}^n U_j)\cup (\cup_{k=1}^m V_k)$ into the following disjoint subsets (indexed by $s=1,\ldots,\mu$): 
\begin{align}\label{conjuntos teor integral}
&L_1=\bigcap_{\zeta=1}^{n+m}W_{\zeta},\;
L_2=\bigcap_{\zeta\neq \zeta_1}W_{\zeta}\setminus \bigcap_{\zeta=1}^{n+m}W_{\zeta},\\
&L_3=\bigcap_{\zeta\neq \zeta_2}W_{\zeta}\setminus \bigcap_{\zeta=1}^{n+m}W_{\zeta}\setminus \Big(\Big(\bigcap_{\zeta\neq \zeta_1}W_{\zeta}\Big)\setminus \Big(\bigcap_{\zeta=1}^{n+m}W_{\zeta}\Big)\Big),\;\ldots \nonumber
\end{align}

Let $\mathcal{B}$ be the family of subsets $A\subset X$ such that $f$ has no fixed points in $\overline{A}\setminus \mathring{A}$. Note that $\mathcal{B}$ is closed under finite intersections and complements, so all the elements in (\ref{conjuntos teor integral}) belong to $\mathcal{B}$. Moreover, if we write 
\begin{align*}
&N_j=\{s\in \{1,\ldots,\mu\} \;:\; L_s\subset U_j\},\\
&M_k=\{s\in \{1,\ldots,\mu\} \;:\; L_s\subset V_k\}
\end{align*}
and we apply the Additivity Axiom of the combinatorial fixed point index we have:
\begin{align*}
    & \sum_{j=i}^n d_j\cdot i_c(X,f,U_j)=\sum_{j=1}^n d_j \sum_{s\in N_j}i_c(X,f,L_s)\\
    &=\sum_{s=1}^{\mu} i_c(X,f,L_s) \cdot\sum_{j:s\in N_j} d_j
    =\sum_{s=1}^{\mu} i_c(X,f,L_s)\cdot\sum_{k:s\in M_k} e_k\\
    &= \sum_{k=1}^m e_k\sum_{s\in M_k}i_c(X,f,L_s)=\sum_{k=1}^m e_k\cdot i_c(X,f,V_k),
\end{align*}
where the third equality follows from $\sum_{j:s\in N_j}d_j=\sum_{k:s\in M_k}e_k$ for $s=1,\ldots, \mu$ (which is a consequence of $\sum_{j=i}^n d_j\mathds{1}_{U_j}=\sum_{k=i}^m e_k\mathds{1}_{V_k}$).

With this, we conclude the result.
\end{proof}
This integral satisfies some important properties.
\begin{theorem}
    Let $X$ be a simplicial complex, $f:X\rightarrow X$ a continuous map and let $h:X\rightarrow\mathbb{Z}$ be $f$-integrable with respect to the index. Then,
    \begin{equation*}
        \int_X h\;d\,i_c(f)=\sum_{p\in\mathbb{Z}}p\cdot i_c(X,f,\{h=p\}),
    \end{equation*}
    where $\{h=k\}\coloneqq \{x\in X : h(x)=k\}$.
\end{theorem}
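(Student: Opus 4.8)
The plan is to observe that the right-hand side is nothing but the integral of $h$ computed from its canonical level-set representation, and then to deduce the equality from the well-definedness of the integral (Theorem~\ref{buena def int}). Concretely, since $h$ takes only integer values and the sets $\{h=p\}$ partition $X$ according to those values, one always has the tautological identity $h=\sum_{p\in\mathbb{Z}}p\,\mathds{1}_{\{h=p\}}$. Thus, provided this is a legitimate $f$-integrable representation, the definition of the integral applied to it yields exactly $\sum_{p}p\cdot i_c(X,f,\{h=p\})$, while the original representation $h=\sum_j d_j\mathds{1}_{U_j}$ yields $\sum_j d_j\cdot i_c(X,f,U_j)$; Theorem~\ref{buena def int} then forces the two to agree.

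The substance of the argument --- and the step I expect to require the most care --- is verifying that the level sets $\{h=p\}$ actually belong to the admissible family $\mathcal{B}$ of subsets $A\subset X$ for which $f$ has no fixed point in $\overline{A}\setminus\mathring{A}$, so that $i_c(X,f,\{h=p\})$ is defined and the canonical representation is $f$-integrable. I would do this by passing to the atoms of the Boolean algebra generated by $U_1,\dots,U_n$: for each $S\subseteq\{1,\dots,n\}$ set $A_S=\bigcap_{j\in S}U_j\cap\bigcap_{j\notin S}(X\setminus U_j)$, on which $h$ is constant with value $\sum_{j\in S}d_j$. Then $\{h=p\}=\bigcup_{S:\sum_{j\in S}d_j=p}A_S$. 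As recorded in the proof of Theorem~\ref{buena def int}, $\mathcal{B}$ is closed under finite intersection and complement, hence (by De Morgan) under finite union; since each $U_j\in\mathcal{B}$, every $A_S$, and therefore every $\{h=p\}$, lies in $\mathcal{B}$.

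It remains only to handle finiteness and to conclude. Because the expression for $h$ is finite, $h$ attains at most $2^n$ distinct values, so all but finitely many level sets are empty; for those, $\overline{\emptyset}\setminus\mathring{\emptyset}=\emptyset$ contains no fixed point and $i_c(X,f,\emptyset)=i(X,f,\emptyset)=0$, so they contribute nothing and the sum over $p\in\mathbb{Z}$ is in fact finite. With the canonical representation shown to be $f$-integrable, Theorem~\ref{buena def int} applied to the two representations $\sum_j d_j\mathds{1}_{U_j}=\sum_p p\,\mathds{1}_{\{h=p\}}$ of the same function gives $\int_X h\,d\,i_c(f)=\sum_j d_j\cdot i_c(X,f,U_j)=\sum_{p\in\mathbb{Z}}p\cdot i_c(X,f,\{h=p\})$, as desired.
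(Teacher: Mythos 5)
Your proof is correct and takes essentially the same approach as the paper: both rest on the Boolean-algebra machinery of Theorem~\ref{buena def int} (closure of the family $\mathcal{B}$ under complements, intersections and hence finite unions) together with the Additivity Axiom, the only difference being that you invoke the well-definedness theorem as a black box applied to the canonical representation $h=\sum_p p\,\mathds{1}_{\{h=p\}}$, while the paper directly reuses the intermediate identity $\sum_j d_j\cdot i_c(X,f,U_j)=\sum_s i_c(X,f,L_s)\cdot\sum_{j:s\in N_j}d_j$ from that theorem's proof and regroups by the common value $p$. Your explicit checks---that each level set lies in $\mathcal{B}$ via the atoms $A_S$, and that all but finitely many levels are empty with vanishing index---are correct details that the paper leaves implicit.
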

\begin{proof}
    Since the family $\mathcal{B}$ defined in Theorem~\ref{buena def int} is closed under finite unions, the result follows from the Additivity Axiom of the combinatorial fixed point index and from
    \begin{equation*}
        \sum_{j=i}^n d_j\cdot i_c(X,f,U_j)=\sum_{s=1}^{\mu} i_c(X,f,L_s) \cdot\sum_{j:s\in N_j} d_j
    \end{equation*}
    (see Theorem~\ref{buena def int}).
\end{proof}
\begin{theorem}[Product rule of the combinatorial fixed point index]\label{regla producto}
    Let $X_1$, $X_2$ be simplicial complexes, $f_1:X_1\rightarrow X_1$, $f_2:X_2\rightarrow X_2$ continuous maps and $A_1\subset X_1$, $A_2\subset X_2$ subspaces such that $f_1$ has no fixed points in $\overline{A_1}\setminus \mathring{A_1}$ and $f_2$ has no fixed points in $\overline{A_2}\setminus \mathring{A_2}$. Then, 
    \begin{equation*}
        i_c(X_1\times X_2,f_1\times f_2,A_1\times A_2)=i_c(X_1,f_1,A_1)\cdot i_c(X_2,f_2,A_2).
    \end{equation*}
\end{theorem}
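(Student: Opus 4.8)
The plan is to reduce the statement to the multiplicativity (product) property of the classical fixed point index, after passing from the combinatorial index to the ordinary index via Definition~\ref{def indice comb}. The two elementary topological facts that drive the argument are that closures and interiors of finite products distribute over the factors: for subsets of a product space one has $\overline{A_1\times A_2}=\overline{A_1}\times\overline{A_2}$ and $\mathring{(A_1\times A_2)}=\mathring{A_1}\times\mathring{A_2}$ in the product topology.

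First I would check that the left-hand side is well defined, i.e.\ that $f_1\times f_2$ has no fixed points in $\overline{A_1\times A_2}\setminus\mathring{(A_1\times A_2)}$. A point $(x_1,x_2)$ is fixed by $f_1\times f_2$ precisely when $x_1$ is fixed by $f_1$ and $x_2$ is fixed by $f_2$. By the two distributivity identities, any boundary point lies in $(\overline{A_1}\times\overline{A_2})\setminus(\mathring{A_1}\times\mathring{A_2})$, so at least one coordinate, say $x_1$, satisfies $x_1\in\overline{A_1}\setminus\mathring{A_1}$. Were such a point fixed, $x_1$ would be a fixed point of $f_1$ in $\overline{A_1}\setminus\mathring{A_1}$, contradicting the hypothesis; hence $f_1\times f_2$ has no boundary fixed points and $i_c(X_1\times X_2,f_1\times f_2,A_1\times A_2)$ is defined.

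Next I would unwind the definition. By Definition~\ref{def indice comb} together with the interior identity,
\begin{equation*}
i_c(X_1\times X_2,f_1\times f_2,A_1\times A_2)=i\bigl(X_1\times X_2,\,f_1\times f_2,\,\mathring{A_1}\times\mathring{A_2}\bigr).
\end{equation*}
Since $\mathring{A_1}$ and $\mathring{A_2}$ are open, $\mathring{A_1}\times\mathring{A_2}$ is an open subset of $X_1\times X_2$, so the right-hand side is an ordinary index evaluated on an open set. The multiplicativity property of the classical fixed point index \cite{Brown} then yields
\begin{equation*}
i\bigl(X_1\times X_2,\,f_1\times f_2,\,\mathring{A_1}\times\mathring{A_2}\bigr)=i(X_1,f_1,\mathring{A_1})\cdot i(X_2,f_2,\mathring{A_2}).
\end{equation*}
Applying Definition~\ref{def indice comb} once more to each factor identifies the right-hand side with $i_c(X_1,f_1,A_1)\cdot i_c(X_2,f_2,A_2)$, which is the desired equality.

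The main obstacle is invoking the multiplicativity property of the classical index in exactly the form required: the product formula must hold for the ambient spaces at hand—finite simplicial complexes, which are compact ANRs, whose product is again an ANR—and for an arbitrary pair of open subsets rather than merely for the whole spaces. Once this classical product formula is secured, the remaining ingredients (the distributivity of closure and interior over finite products, and the coordinatewise description of the fixed points of $f_1\times f_2$) are routine.
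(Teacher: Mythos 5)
Your proposal is correct and follows essentially the same route as the paper: both arguments reduce to the classical product rule of the fixed point index via $\mathring{(A_1\times A_2)}=\mathring{A_1}\times\mathring{A_2}$, with the coordinatewise description of fixed points of $f_1\times f_2$ guaranteeing the absence of boundary fixed points. Your write-up is in fact slightly more explicit than the paper's on the well-definedness step, but there is no substantive difference in method.
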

\begin{proof}
    The theorem follows from the product rule of the fixed point index \cite[IV.B Theorem 6]{Brown}. First, note that since $f_1$ has no fixed points in $\overline{A_1}\setminus \mathring{A_1}$ and $f_2$ has no fixed points in $\overline{A_2}\setminus \mathring{A_2}$, the $f_1\times f_2$ has no fixed points in $\overline{A_1\times A_2}\setminus \mathring{(A_1\times A_2)}$.
    Now, applying the product rule of the fixed point index \cite[IV.B Theorem 6]{Brown}, we obtain
    \begin{align*}
        &i_c(X_1\times X_2,f_1\times f_2,A_1\times A_2)=i(X_1\times X_2,f_1\times f_2,\mathring{(A_1\times A_2)})\\
        &=i(X_1\times X_2,f_1\times f_2,\mathring{A_1}\times \mathring{A_2})=i(X_1,f_1,\mathring{A_1})\cdot i(X_2,f_2,\mathring{A_2})\\
        &=i_c(X_1,f_1,A_1)\cdot i_c(X_2,f_2,A_2). \qedhere
    \end{align*}
\end{proof}

The previous product rule admits, like in \cite[Lemma 6.2]{M-M1}, a generalization that, in fact, is a primitive Fubini's theorem. 

\begin{theorem}\label{especie de fubini}
    Let $(X,p,B)$ be a fiber bundle with typical fiber $F$, where $p$ is definable and $X$, $B$ and $F$ are simplicial complexes. Let $B'\subset B$ be definable such that, if we denote $A=p^{-1}(B')$ we have $\overline{A}\overset{g}{\approx} \overline{B'}\times F $ and $A\overset{g}{\approx} B'\times F$. Let $h:X\rightarrow \mathbb{Z}$ be defined as $h=\mathds{1}_A$ and let $l:X\rightarrow X$ be a map such that $l_{|\overline{A}}\equiv l_1\times l_2$ with $l_1:\overline{B'}\rightarrow\overline{B'}$ and $l_2:F\rightarrow F$. Suppose also that $l_1$ has no fixed points in $\overline{B'}\setminus\mathring{B'}$ (and, hence, $l$ has no fixed points in $\overline{A}\setminus \mathring{A}$). Then
    \begin{equation*}
        \int_X h\; d\, i_c(l)= \int_B\biggl(\int_{p^{-1}(b)} h\;d\,i_c(\mathrm{id}\times l_2)\biggr)\;d\,i_c(l_1).
    \end{equation*}
\end{theorem}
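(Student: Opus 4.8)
The plan is to show that both sides of the identity reduce to the same product $\varLambda(l_2)\cdot i_c(\overline{B'},l_1,B')$, after which their equality is automatic. The left-hand side will be handled by the product rule, and the right-hand side by a fiberwise evaluation followed by a single outer integration.

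First I would rewrite the left-hand side. By definition, $\int_X h\,d\,i_c(l)=i_c(X,l,A)$. Because $l$ has no fixed points in $\overline{A}\setminus\mathring{A}$, every fixed point of $l$ in $\overline{A}$ lies in $\mathring{A}$, so the excision and additivity properties of the fixed point index (the analogue for the index of the restriction property \cite[Remark~3.5]{M-M1} exploited in Theorem~\ref{thm: inv top num comb}) let me replace the ambient complex $X$ by $\overline{A}$, giving $i_c(X,l,A)=i_c(\overline{A},l_{|\overline{A}},A)$. Next, the homeomorphism $g$ carries $\overline{A}$ to $\overline{B'}\times F$ and $A$ to $B'\times F$, and conjugates $l_{|\overline{A}}$ into $l_1\times l_2$; by topological invariance of the index (\cite[Corollary~7.2]{ONeill}, as used in Theorem~\ref{thm: inv top num comb}) this transports the combinatorial index to
\[
i_c(\overline{A},l_{|\overline{A}},A)=i_c(\overline{B'}\times F,\,l_1\times l_2,\,B'\times F).
\]
Since $l_1$ has no fixed points in $\overline{B'}\setminus\mathring{B'}$ and $l_2$ vacuously has none in $\overline{F}\setminus\mathring{F}=\emptyset$, the product rule (Theorem~\ref{regla producto}) factors this as $i_c(\overline{B'},l_1,B')\cdot i_c(F,l_2,F)$, and the Normalization Axiom (CI4) gives $i_c(F,l_2,F)=\varLambda(l_2)$. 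Hence the left-hand side equals $\varLambda(l_2)\cdot i_c(\overline{B'},l_1,B')$.

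Then I would compute the right-hand side starting from the inner integral. For $b\in B'$ the product structure makes the fiber $p^{-1}(b)\approx F$ invariant under $\mathrm{id}\times l_2$ (the base coordinate is fixed), with restriction a copy of $l_2$, while $h_{|p^{-1}(b)}=\mathds{1}_{A\cap p^{-1}(b)}=\mathds{1}_{p^{-1}(b)}$; for $b\notin B'$ one has $A\cap p^{-1}(b)=\emptyset$, so $h_{|p^{-1}(b)}=0$. Using the Normalization Axiom on the compact fiber, the inner integral therefore equals the $\mathbb{Z}$-valued function $\varLambda(l_2)\,\mathds{1}_{B'}$ on $B$. Integrating once more against $i_c(l_1)$ and pulling out the constant $\varLambda(l_2)$ yields
\[
\int_B\varLambda(l_2)\,\mathds{1}_{B'}\,d\,i_c(l_1)=\varLambda(l_2)\cdot i_c(\overline{B'},l_1,B'),
\]
which agrees with the left-hand side and proves the theorem.

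The hard part will be the first reduction: passing from the genuinely twisted bundle index $i_c(X,l,A)$ to the honest product index over $\overline{B'}\times F$. This requires both the passage from $X$ to $\overline{A}$ and the transport by $g$, and in each of these the interiors are taken in different ambient spaces (in $X$, in $\overline{A}$, and after the identification in $\overline{B'}\times F$); reconciling these is exactly where the excision and additivity properties together with the topological invariance \cite[Corollary~7.2]{ONeill} must be invoked carefully, using that no fixed point lies on the relevant boundaries. Once the factorization into $\varLambda(l_2)\cdot i_c(\overline{B'},l_1,B')$ is secured, the fiberwise evaluation and the outer integration are routine applications of the Normalization Axiom and the definition of the integral.
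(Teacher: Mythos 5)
Your overall route is the same as the paper's: reduce the left-hand side to an index over a product, factor it via the product rule (Theorem \ref{regla producto}) and the Normalization Axiom into $\varLambda(l_2)\cdot i_c(\,\cdot\,,l_1,\,\cdot\,)$, and show the right-hand side equals the same quantity by evaluating the inner integral fiberwise to $\varLambda(l_2)\,\mathds{1}_{B'}$ and integrating once against $i_c(l_1)$. Your treatment of the right-hand side is in fact slightly cleaner than the paper's, which works with $\mathds{1}_{p^{-1}(B'')}$ for an auxiliary $B''\subset B'$ and then adds a correcting term $\int_B\bigl(\int_{p^{-1}(b)}\mathds{1}_{A\setminus p^{-1}(B'')}\,d\,i_c(\mathrm{id}\times l_2)\bigr)d\,i_c(l_1)$, which vanishes by additivity and Proposition \ref{pto fijo indice comb}.

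However, the step you explicitly defer as ``the hard part'' is a genuine gap, and it is precisely where the paper's proof does its real work. The transport $i_c(X,l,A)=i_c(\overline{A},l_{|\overline{A}},A)=i_c(\overline{B'}\times F,\,l_1\times l_2,\,B'\times F)$ cannot be obtained by simply citing \cite[Corollary 7.2]{ONeill}: as the paper points out in the remark following Theorem \ref{thm: inv top num comb}, O'Neill's invariance (via \cite[Theorem 2.5, L5]{ONeill}) requires the fixed-point sets to lie in simplicial subcomplexes that correspond under the homeomorphism, and the trivialization $g$ is only assumed to be a homeomorphism --- it is neither simplicial nor definable, so $g$ of a subcomplex need not be a subcomplex. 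The paper fills this hole with a localization device: it first disposes of the case where $l$ has no fixed points in $\overline{A}$ (both sides vanish by Proposition \ref{pto fijo indice comb}); otherwise it chooses $B''\subset B'$ with all fixed points of $l_1$ in $(B'')^{\circ}$ and $l(\overline{p^{-1}(B'')})\subset A$, takes a definable triangulation $(Y,\overline{W},W,V)$ compatible with $(X,\overline{p^{-1}(B')},p^{-1}(B'),p^{-1}(B''))$ --- here the definability of $p$ and $B'$ is used, replacing any definability demand on $g$ --- and only then applies \cite[Corollary 7.2]{ONeill} to pass from $i(Y,\tilde{l},\mathring{V})$ to $i(B\times F,l_1\times l_2,\mathring{(B''\times F)})$, recovering $B'$ from $B''$ afterwards by the Additivity Axiom. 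Without some such localization around the fixed-point set, your first reduction is an unproved assertion; note also that the same issue resurfaces at the very end of your argument, where you silently identify $i_c(B,l_1,B')$ (which is what the outer integral gives by definition) with $i_c(\overline{B'},l_1,B')$ (which is what your left-hand computation produces) --- reconciling these two ambient spaces and their different relative interiors again requires exactly this excision-plus-invariance mechanism.
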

\begin{proof}
    We divide the proof in two cases. If $l$ has no fixed points in $\overline{A}$ then, by Corrollary~\ref{pto fijo indice comb} both terms are zero. In other case, we can consider $B''\subset B'$ such that all the fixed points of $l_1$ in $B'$ are in $(B'')^{\circ}$ (and so, all the fixed point of $l$ in $\overline{A}$ are in $(p^{-1}(B''))^{\circ}$) and such that $l(\overline{p^{-1}(B'')})\subset A$.
    
    Consider now a triangulation $(Y,\overline{W},W,V)$ that is compatible with $(X,\overline{p^{-1}(B')},p^{-1}(B'),p^{-1}(B''))$.
    By definition, we have
    \begin{equation*}
        \int_X \mathds{1}_{A}\;d\,i_c(l)=i_c(X,l,p^{-1}(B')).
    \end{equation*}
    Now, by the Additivity Axiom of the combinatorial fixed point index, the last term equals $i_c(X,l,p^{-1}(B''))$. Now we have
    \begin{equation*}
        i_c(X,l,p^{-1}(B''))=i(X,l,\mathring{p^{-1}(B'')})=i(Y,\tilde{l},\mathring{V}),
    \end{equation*}
    where $\tilde{l}$ is the map induced by $l$ trough the triangulation. But by \cite[Corollary 7.2]{ONeill}, we obtain
    \begin{equation*}
        i(Y,\tilde{l},\mathring{V})= i(B\times F,l_1\times l_2, \mathring{(B''\times F)})=i(B,l_1,\mathring{B''})\cdot i(F,l_2,F),
    \end{equation*}
    where the last equality follows from the product rule (Theorem~\ref{regla producto}). From here if we take $b_0\in B''$, we get
    \begin{align*}
        &i(B,l_1,\mathring{B''})\cdot i(F,l_2,F)=i_c(B,l_1,B'')\cdot i_c(F,l_2,F)\\
        &=i_c(B,l_1,B'')\cdot \int_F \mathds{1}_F\; d\, i_c(l_2)=i_c(B,l_1,B'')\cdot \int_{p^{-1}(b_0)} \mathds{1}_{p^{-1}(b_0)} \; d\, i_c(\mathrm{id}\times l_2)\\
        &=\int_B \mathds{1}_{B''} \biggl(\int_{p^{-1}(b_0)} \mathds{1}_{p^{-1}(b_0)}\; d\, i_c(\mathrm{id}\times l_2)\biggr)\;d\, i_c(l_1)\\
        &=\int_{B} \biggl(\int_{p^{-1}(b)} \mathds{1}_{p^{-1}(B'')}\; d\, i_c(\mathrm{id}\times l_2)\biggr)\;d\,i_c(l_1)\\
        &=\int_{B} \biggl(\int_{p^{-1}(b)} \mathds{1}_{p^{-1}(B'')}\; d\, i_c(\mathrm{id}\times l_2)\biggr)\;d\,i_c(l_1)+0\\
        &=\int_{B} \biggl(\int_{p^{-1}(b)} \mathds{1}_{p^{-1}(B'')}\; d\, i_c(\mathrm{id}\times l_2)\biggr)\;d\,i_c(l_1)\\
        &+\int_{B} \biggl(\int_{p^{-1}(b)} \mathds{1}_{A\setminus p^{-1}(B'')}\; d\, i_c(\mathrm{id}\times l_2)\biggr)\;d\,i_c(l_1)\\
        &=\int_{B} \biggl(\int_{p^{-1}(b)} h\; d\, i_c(\mathrm{id}\times l_2)\biggr)\;d\,i_c(l_1). \qedhere
    \end{align*}
\end{proof}
\begin{remark}
    Note that in Theorem~\ref{especie de fubini} we do not require $l$, $l_1$ and $l_2$ to be homeomorphisms. The hypothesis of $l$-invariance needed in \cite[Lemma 6.2]{M-M1} is also removed.
\end{remark}

\section{Integration of real maps}\label{int real}

In \cite{B-G2}, the authors introduced the concepts of lower and upper Riemann-sum of a real valued function h from a simplicial complex $X$ with respect to the combinatorial Euler characteristic. The purpose of this section is to extend this notion to a sort of Riemann-sums with respect to the fixed points. A first approach to do this could be to integrate like in \cite{B-G2} but considering the combinatorial Lefschetz number of a map $f:X\rightarrow X$ instead of the combinatorial Euler characteristic. Nevertheless, as we take smaller partitions of an interval, all the level sets of these elements of the partition should be $f$-invariant, which is an enormous restriction on the maps that can be integrated.

To solve this problem, the main idea is to integrate with respect to the combinatorial fixed point index, instead of the combinatorial Lefschetz number. Indeed, with this tool, we do not need to restrict the integration to homeomorphisms.
\begin{definition}
    Let $X$ be a simplicial complex and $f:X\rightarrow X$ a continuous map. We say that $f$ is \textit{index-strict} if one of the followings properties hold:
    \begin{itemize}
        \item For all open set $U\subset X$ such that $f$ has no fixed points in $\overline{U}\setminus \mathring{U}$, $i(X,f,U)\geq 0$.
        \item For all open set $U\subset X$ such that $f$ has no fixed points in $\overline{U}\setminus \mathring{U}$, $i(X,f,U)\leq 0$.
    \end{itemize}
\end{definition}

\begin{definition}\label{def real integrable}
    Let $X$ be a simplicial complex and $f:X\rightarrow X$ a index-strict map. A map $h:X\rightarrow\mathbb{R}$ is called \textit{real} $f$-\textit{integrable} if the following properties hold:
    \begin{itemize}
    \item For all rational $p$, $f$ has no fixed points in $\overline{h^{-1}(p)}$.
    \item For all pair $(p,q)$ of rational numbers ($p<q$) $f$ has no fixed points in $\overline{h^{-1}((p,q))}\setminus \mathring{h^{-1}((p,q))}$.
    \item The map $h$ is bounded.
    \end{itemize}
\end{definition}
\begin{remark}
    In Definition~\ref{def real integrable} we could have asked for the first two conditions in some other dense subsets of $\mathbb{R}$ instead of in $\mathbb{Q}$. However, in that case, Definition~\ref{def sumas riemann} would not admit such a clear expression. 
\end{remark}
\begin{lemma}
    For a continuous map $h$, if $f$ has no fixed points in $h^{-1}(p)$ for $p\in\mathbb{Q}$, then it is real $f$-integrable.
\end{lemma}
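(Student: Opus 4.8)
The plan is to verify the three defining conditions of Definition~\ref{def real integrable} directly, using only the continuity of $h$, the compactness of $X$, and the hypothesis that $f$ is fixed-point free on $h^{-1}(p)$ for every $p\in\mathbb{Q}$. The governing remark is that preimages of closed (resp.\ open) sets under the continuous map $h$ are closed (resp.\ open), which lets one compute the closures and interiors appearing in the definition. Boundedness is immediate: since all complexes are finite, $X$ is compact, so the continuous function $h$ is bounded, giving the third condition with no appeal to the hypothesis.

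For the first condition, $h^{-1}(p)$ is closed because $\{p\}$ is closed and $h$ is continuous; hence $\overline{h^{-1}(p)}=h^{-1}(p)$, and the hypothesis says exactly that $f$ has no fixed points there. The substance of the lemma lies in the second condition. Here $h^{-1}((p,q))$ is open, so it equals its own interior and the set $\overline{h^{-1}((p,q))}\setminus\mathring{h^{-1}((p,q))}$ is precisely the topological boundary $\partial h^{-1}((p,q))$. I would establish the containment
\[
\partial h^{-1}((p,q))\subseteq h^{-1}(p)\cup h^{-1}(q)
\]
as follows: $h^{-1}([p,q])$ is closed and contains $h^{-1}((p,q))$, hence contains its closure, so every $x\in\overline{h^{-1}((p,q))}$ has $h(x)\in[p,q]$; if moreover $x\notin h^{-1}((p,q))$ then $h(x)\notin(p,q)$, forcing $h(x)\in\{p,q\}$. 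Since $p,q\in\mathbb{Q}$, the hypothesis yields that $f$ is fixed-point free on both $h^{-1}(p)$ and $h^{-1}(q)$, hence on the boundary, which is the second condition.

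I do not expect a genuine obstacle; the argument is elementary point-set topology. The only point worth flagging is the mismatch between the hypothesis (stated for the fibers $h^{-1}(p)$) and the first condition of the definition (stated for their closures $\overline{h^{-1}(p)}$), which is harmless precisely because continuity makes those fibers already closed. The boundary containment displayed above is the mechanism that upgrades the fiberwise fixed-point-free hypothesis to the statement about preimages of open intervals.
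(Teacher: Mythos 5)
Your proposal is correct and follows essentially the same route as the paper's proof: boundedness from compactness plus continuity, and the containment $\overline{h^{-1}((p,q))}\setminus\mathring{h^{-1}((p,q))}\subseteq h^{-1}(\{p\})\cup h^{-1}(\{q\})$ via $\overline{h^{-1}((p,q))}\subseteq h^{-1}([p,q])$. Your explicit verification of the first condition (that $h^{-1}(p)$ is already closed, so the hypothesis on fibers matches the definition's requirement on closures) is a small point the paper leaves implicit, and is a welcome addition.
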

\begin{proof}
    Since $X$ is compact and $h$ is continuous, then $h$ is bounded. Consider now $p,q\in \mathbb{Q}$ ($p<q$). We must check that $f$ has no fixed points in $\overline{h^{-1}(p,q)}\setminus \mathring{h^{-1}(p,q)}$. Now,
    \begin{align*}
        &\overline{h^{-1}((p,q))}\setminus \mathring{h^{-1}((p,q))}=\overline{h^{-1}((p,q))}\setminus h^{-1}((p,q))\subset h^{-1}([p,q])\setminus h^{-1}((p,q))\\
        & =h^{-1}(\{p,q\})=h^{-1}(\{p\})\cup h^{-1}(\{q\}).
    \end{align*}
    Hence, since $f$ has no fixed points in $h^{-1}(\{p\})$ and $h^{-1}(\{q\})$, neither does in $\overline{h^{-1}(p,q)}\setminus \mathring{h^{-1}(p,q)}$.
\end{proof}
\begin{definition}\label{def sumas riemann}
    Let $X$ be a simplicial complex, $f:X\rightarrow X$ a index-strict map and $h:X\rightarrow\mathbb{R}$ a real $f$-integrable map. We define the \textit{lower Riemann-sum} \textit{with respect} \textit{to the combinatorial fixed point index} as
    \begin{equation}\label{suma inf}
        \int_X h \;\lfloor d\,i_c(f)\rfloor=\mathrm{lim}_{n\rightarrow \infty} \frac{1}{2^n}\int_X \lfloor 2^nh\rfloor\;d\,i_c(f),
    \end{equation}
    where $\lfloor\cdot\rfloor$ denotes the floor function. Similarly, we define the \textit{upper Riemann-sum} \textit{with respect} \textit{to the combinatorial fixed point index} as
    \begin{equation}\label{suma sup}
        \int_X h\; \lceil d\,i_c(f)\rceil=\mathrm{lim}_{n\rightarrow \infty} \frac{1}{2^n}\int_X \lceil 2^nh\rceil\;d\,i_c(f).
    \end{equation}
    (Here $\lceil\cdot\rceil$ denote the ceiling function).
    \end{definition}
    As in \cite{B-G2}, both sums might not agree. However, it can be proved that these sums converge.
    \begin{theorem}
        Let $X$ be a simplicial complex, $f:X\rightarrow X$ a index-strict map and $h:X\rightarrow\mathbb{R}$ a real $f$-integrable map. Then the limits presented in Equations~(\ref{suma inf}) and (\ref{suma sup}) exist.
    \end{theorem}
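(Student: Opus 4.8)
The plan is to prove convergence for the lower Riemann-sum; the upper one is entirely analogous. Write $S_n=\frac{1}{2^n}\int_X \lfloor 2^n h\rfloor\;d\,i_c(f)$. Since $h$ is bounded, each step function $\lfloor 2^n h\rfloor$ takes finitely many integer values, its level sets being $L_p^{(n)}=h^{-1}\bigl([p/2^n,(p+1)/2^n)\bigr)$. First I would check that $\lfloor 2^n h\rfloor$ is genuinely $f$-integrable with respect to the index, i.e. that each $L_p^{(n)}$ lies in the family $\mathcal{B}$ of Theorem~\ref{buena def int}. This follows from the two boundary conditions in Definition~\ref{def real integrable}, because $\overline{L_p^{(n)}}\setminus\mathring{L_p^{(n)}}$ is contained in $\bigl(\overline{h^{-1}((p/2^n,(p+1)/2^n))}\setminus\mathring{h^{-1}((p/2^n,(p+1)/2^n))}\bigr)\cup\overline{h^{-1}(p/2^n)}$, a set on which $f$ has no fixed points. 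Hence every $S_n$ is well defined.

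Next I would control consecutive terms. From the identity $\lfloor 2^{n+1}h\rfloor-2\lfloor 2^n h\rfloor=\mathds{1}_{A_n}$, where $A_n=\{x:\{2^n h(x)\}\ge 1/2\}$ is the finite union of the odd-indexed level sets at scale $n+1$ and hence lies in $\mathcal{B}$ (which is closed under finite unions), linearity of the integral gives
\[
S_{n+1}-S_n=\frac{1}{2^{n+1}}\,i_c(X,f,A_n).
\]
Now I would invoke index-strictness. Since $i_c(X,f,A_n)=i(X,f,\mathring{A_n})$ and $\overline{\mathring{A_n}}\setminus\mathring{A_n}\subset\overline{A_n}\setminus\mathring{A_n}$ is fixed-point free, the defining inequality of an index-strict map applies to the open set $\mathring{A_n}$, yielding $i_c(X,f,A_n)\ge 0$ (resp.\ $\le 0$). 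Thus $(S_n)$ is monotone increasing (resp.\ decreasing).

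It then remains to bound $(S_n)$. Applying the Additivity Axiom (CI3) and Normalization Axiom (CI4) to the partition $\{L_p^{(n)}\}_p$ of $X$ gives $\sum_p i_c(X,f,L_p^{(n)})=i_c(X,f,X)=\varLambda(f)$; here the additivity hypothesis is legitimate because every fixed point of $f$ lies in the interior of its own level set (a point of $L_p^{(n)}\setminus\mathring{L_p^{(n)}}\subset\overline{L_p^{(n)}}\setminus\mathring{L_p^{(n)}}$ cannot be fixed). Writing $M=\sup_X|h|<\infty$, one has $|p/2^n|\le M+1$ on the nonempty levels, and since all the $i_c(X,f,L_p^{(n)})$ share one sign,
\[
|S_n|\le (M+1)\sum_p\bigl|i_c(X,f,L_p^{(n)})\bigr|=(M+1)\,|\varLambda(f)|,
\]
a bound independent of $n$. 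A bounded monotone sequence converges, so the limit in~(\ref{suma inf}) exists; the same argument with $\lceil 2^{n+1}h\rceil-2\lceil 2^n h\rceil=-\mathds{1}_{A'_n}\in\{-1,0\}$ disposes of~(\ref{suma sup}).

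The main obstacle is the bookkeeping of boundary hypotheses: one must show that all the sets involved ($L_p^{(n)}$, $A_n$, $A'_n$) lie in $\mathcal{B}$, and that every fixed point sits in an interior, so that the Additivity Axiom is genuinely applicable and the index-strict inequality can be invoked with a consistent sign throughout. Once these verifications are in place the convergence is immediate; note that index-strictness is precisely what upgrades the uniform estimate $|\varLambda(f)|$ into convergence of a monotone sequence, rather than merely bounding the increments.
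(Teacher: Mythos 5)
Your proposal is correct, and its skeleton matches the paper's proof---show the dyadic sums are monotone via index-strictness and bounded, then conclude---but your execution of both halves is genuinely different. For monotonicity the paper works termwise: it splits each summand $(m+\frac{l}{2^n})\,i_c\bigl(X,f,h^{-1}((m+\frac{l}{2^n},m+\frac{l+1}{2^n}))\bigr)$ into its two children at scale $n+1$ by the Additivity Axiom, compares coefficients, and sums; your telescoping identity $\lfloor 2^{n+1}h\rfloor-2\lfloor 2^nh\rfloor=\mathds{1}_{A_n}$ packages the entire increment into a single index, $S_{n+1}-S_n=2^{-(n+1)}\,i_c(X,f,A_n)$, whose sign index-strictness reads off at once. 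This is cleaner, and it makes the upper-sum case honestly symmetric (the increment $-\mathds{1}_{A'_n}$ shows the upper sums are monotone in the opposite direction), where the paper only declares it analogous. For boundedness the paper uses the coarse-scale sum $\sum_{m=M_1}^{M_2}(m+1)\,i_c\bigl(X,f,h^{-1}((m,m+1))\bigr)$ as a bound, while you combine CI3 with CI4 to obtain $\sum_p i_c(X,f,L_p^{(n)})=\varLambda(f)$ and hence the uniform, representation-free estimate $|S_n|\le(M+1)|\varLambda(f)|$---a nice by-product, since it names an explicit bound the paper never states. The price of your route is the boundary bookkeeping, and you discharge it correctly: the containment $\overline{L_p^{(n)}}\setminus\mathring{L_p^{(n)}}\subset\bigl(\overline{h^{-1}((p/2^n,(p+1)/2^n))}\setminus\mathring{h^{-1}((p/2^n,(p+1)/2^n))}\bigr)\cup\overline{h^{-1}(p/2^n)}$ places the half-open level sets in $\mathcal{B}$; $A_n$ and $A'_n$, as finite unions of such sets, stay in $\mathcal{B}$; index-strictness legitimately applies to $\mathring{A_n}$ because $\overline{\mathring{A_n}}\setminus\mathring{A_n}\subset\overline{A_n}\setminus\mathring{A_n}$; and the hypothesis of CI3 holds because a fixed point in $L_p^{(n)}\setminus\mathring{L_p^{(n)}}$ would contradict the boundary condition of Definition~\ref{def real integrable}. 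Your increment formula also tacitly uses linearity of the integral over indicator-sum representations, which is immediate from the well-definedness result (Theorem~\ref{buena def int}); the paper relies on the same verifications implicitly, so nothing is missing on either side.
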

\begin{proof}
    We will show the convergence of the lower Riemann-sum. Let $M_1$ and $M_2$ be respective lower and upper integer bounds of $h$ (remember that $h$ is bounded since it is real $f$-integrable). It is easy to verify that
    \begin{equation*}
        \frac{1}{2^n}\int_X \lfloor 2^nh\rfloor\;d\,i_c(f)=\sum_{m=M_1}^{M_2} \sum_{l=0}^{2^n-1}( m+\frac{l}{2^n})\cdot i_c\bigl(X,f,h^{-1}((m+\frac{l}{2^n},m+\frac{l+1}{2^n}))\bigr)
    \end{equation*}
    (remember that, since $f$ has no fixed points in the inverse images of rational numbers, $i_c(X,f,[m+\frac{l}{2^n},m+\frac{l+1}{2^n}))=i_c(X,f,(m+\frac{l}{2^n},m+\frac{l+1}{2^n}))$).

    Since $f$ is index-strict, imagine for example that for all open set $U\subset X$ such that $f$ has no fixed points in $\overline{U}\setminus \mathring{U}$, $i(X,f,U)\geq 0$ (the other case is analogous). As a consequence of this positivity of the index and of the Additivity Axiom of the combinatorial fixed point index we have
    \begin{align*}
        &(m+\frac{l}{2^n})\cdot i_c\bigl(X,f,h^{-1}((m+\frac{l}{2^n},m+\frac{l+1}{2^n}))\bigr)\\
       & =(m+\frac{l}{2^n})\cdot\biggl(   i_c\bigl(X,f,h^{-1}((m+\frac{l}{2^n},m+\frac{l}{2^n}+\frac{1}{2^{n+1}}))\bigr) \\
       &+i_c\bigl(X,f,h^{-1}((m+\frac{l}{2^n}+\frac{1}{2^{n+1}},m+\frac{l+1}{2^n}))\bigr)  \biggr)\\
       &\leq(m+\frac{l}{2^n})\cdot   i_c\bigl(X,f,h^{-1}((m+\frac{l}{2^n},m+\frac{l}{2^n}+\frac{1}{2^{n+1}}))\bigr) \\
       &+(m+\frac{l}{2^n}+\frac{1}{2^{n+1}})\cdot i_c\bigl(X,f,h^{-1}((m+\frac{l}{2^n}+\frac{1}{2^{n+1}},m+\frac{l+1}{2^n}))\bigr) \\
       &=(m+\frac{2l}{2^{n+1}})\cdot   i_c\bigl(X,f,h^{-1}((m+\frac{2l}{2^{n+1}},m+\frac{2l+1}{2^{n+1}}))\bigr)\\
       &+(m+\frac{2l+1}{2^{n+1}})\cdot i_c\bigl(X,f,h^{-1}((m+\frac{2l+1}{2^{n+1}},m+\frac{2l+2}{2^{n+1}}))\bigr)
    \end{align*}
    for all $n$, $l\in\{1,\ldots, 2^n-1\}$ and $m\in\{M_1,\ldots,M_2\}$.

    It follows then that the sequence $\frac{1}{2^n}\int_X \lfloor 2^nh\rfloor\;d\,i_c(f)$ is increasing and bounded by 
    \begin{equation*}
        \sum_{m=M_1}^{M_2}(m+1)\cdot i_c\bigl(X,f,h^{-1}((m,m+1))\bigr),
    \end{equation*}
    so it converges. Note that if the index $i(X,f,\_)$ were always negative, the sequence would decrease and have a lower bound.
\end{proof}

The following classical result follows from the Commutative Axiom of the fixed point index.
\begin{lemma}\label{ultimo lema auxiliar}
    Given $X$ and $Y$ simplicial complexes, $f:X\rightarrow X$ a continuous map, $g:X\rightarrow Y$ a homeomorphism and $A\subset X$ a open subset such that $f$ has no fixed points in $\overline{A}\setminus A$ then 
    \begin{equation*}
    i(X,f,A)=i(Y,g\circ f \circ g^{-1}, g(A)).
    \end{equation*}

\end{lemma}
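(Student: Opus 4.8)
The plan is to realize $f$ as a composite running through $Y$ and then invoke the Commutativity Axiom of the fixed point index \cite[IV.A]{Brown} directly. Recall that this axiom asserts the following: given maps $\alpha\colon X\rightarrow Y$ and $\beta\colon Y\rightarrow X$ together with an open set $A\subset X$ on whose boundary $\overline{A}\setminus A$ the self-map $\beta\circ\alpha$ has no fixed points, one has
\[
i(X,\beta\circ\alpha,A)=i(Y,\alpha\circ\beta,\beta^{-1}(A)).
\]
The whole proof is a matter of choosing $\alpha$ and $\beta$ so that $\beta\circ\alpha$ becomes $f$, the conjugate $\alpha\circ\beta$ becomes $g\circ f\circ g^{-1}$, and the target set $\beta^{-1}(A)$ becomes exactly $g(A)$.

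First I would factor the given self-map $f\colon X\rightarrow X$ through $Y$ by setting $\alpha=g\circ f\colon X\rightarrow Y$ and $\beta=g^{-1}\colon Y\rightarrow X$; this is legitimate because $g$ is a homeomorphism, so $\beta$ is a genuine continuous map defined on all of $Y$. With this choice one computes $\beta\circ\alpha=g^{-1}\circ g\circ f=f$ and $\alpha\circ\beta=g\circ f\circ g^{-1}$, while $\beta^{-1}(A)=(g^{-1})^{-1}(A)=g(A)$, the latter being open in $Y$ precisely because $g$ is a homeomorphism.

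Next I would verify the hypothesis of the axiom for this factorization. Since $\beta\circ\alpha=f$ and $f$ has by assumption no fixed points in $\overline{A}\setminus A$, the axiom applies verbatim. Substituting the three computations above into the displayed identity then gives
\[
i(X,f,A)=i(Y,g\circ f\circ g^{-1},g(A)),
\]
which is the desired conclusion.

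I expect no serious obstacle here: the entire content lies in recognizing the correct factorization $f=g^{-1}\circ(g\circ f)$, after which the statement is a literal instance of the Commutativity Axiom. The only points deserving a word of justification are that $g^{-1}$ is a bona fide continuous map (so that $\beta$ makes sense and the axiom is applicable) and that $g(A)$ is open in $Y$ (so that the index on the $Y$-side is defined), both of which are immediate consequences of $g$ being a homeomorphism.
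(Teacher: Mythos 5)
Your proof is correct and follows exactly the route the paper intends: the paper states that the lemma ``follows from the Commutative Axiom of the fixed point index'' and gives no further detail, and your factorization $f=g^{-1}\circ(g\circ f)$ with $\alpha=g\circ f$, $\beta=g^{-1}$ is precisely the instantiation that makes this work, since $\beta\circ\alpha=f$, $\alpha\circ\beta=g\circ f\circ g^{-1}$, and $\beta^{-1}(A)=g(A)$. Your verification of the hypotheses (openness of $g(A)$ and absence of fixed points of $\beta\circ\alpha=f$ on $\overline{A}\setminus A$) is exactly what is needed, so your write-up simply supplies the details the paper leaves implicit.
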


The next result generalizes \cite[Lemma 2]{B-G2}.
\begin{proposition}[Robustness under changes of coordinates]
    Let $X$ and $Y$ be simplicial complexes, $f:X\rightarrow X$ a index-strict map, $h:X\rightarrow\mathbb{R}$ a real $f$-integrable map and $g:X\rightarrow Y$ a homeomorphism. Then:
    \begin{equation*}
        \int_X h \lfloor d\,i_c(f)\rfloor=\int_Y h\circ g^{-1} \lfloor d\,i_c(g\circ f\circ g^{-1})\rfloor
    \end{equation*}
    and
        \begin{equation*}
        \int_X h \lceil d\,i_c(f)\rceil=\int_Y h\circ g^{-1} \lceil d\,i_c(g\circ f\circ g^{-1})\rceil.
    \end{equation*}
\end{proposition}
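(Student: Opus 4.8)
The plan is to reduce the identity to a term-by-term comparison of the integer-valued integrals appearing in Definition~\ref{def sumas riemann}, and then pass to the limit. Writing $\tilde f = g\circ f\circ g^{-1}$, recall from the proof of the convergence theorem that, for each $n$,
\[
\frac{1}{2^n}\int_X \lfloor 2^n h\rfloor\;d\,i_c(f)=\sum_{m=M_1}^{M_2}\sum_{l=0}^{2^n-1}\Bigl(m+\tfrac{l}{2^n}\Bigr)\cdot i_c\Bigl(X,f,h^{-1}\bigl((m+\tfrac{l}{2^n},m+\tfrac{l+1}{2^n})\bigr)\Bigr),
\]
where the integer bounds $M_1,M_2$ of $h$ are also valid for $h\circ g^{-1}$, since $h$ and $h\circ g^{-1}$ have the same range. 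Hence it suffices to prove, for every open interval $I\subset\mathbb R$ with rational endpoints, the term-by-term equality $i_c\bigl(X,f,h^{-1}(I)\bigr)=i_c\bigl(Y,\tilde f,(h\circ g^{-1})^{-1}(I)\bigr)$.

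First I would record the set identity $(h\circ g^{-1})^{-1}(I)=g\bigl(h^{-1}(I)\bigr)$, and use that the homeomorphism $g$ commutes with the interior operator, so that $\mathring{g(h^{-1}(I))}=g(\mathring{h^{-1}(I)})$. Putting $U=\mathring{h^{-1}(I)}$ (an open set), the definition of the combinatorial index turns the desired equality into
\[
i\bigl(X,f,U\bigr)=i\bigl(Y,\tilde f,g(U)\bigr).
\]
This is precisely the conclusion of Lemma~\ref{ultimo lema auxiliar}, provided $f$ has no fixed points in $\overline U\setminus U$. That hypothesis holds because $U=\mathring{h^{-1}(I)}\subset h^{-1}(I)$ gives the inclusion $\overline U\setminus U\subset\overline{h^{-1}(I)}\setminus\mathring{h^{-1}(I)}$, and the latter set is fixed-point free by the definition of real $f$-integrability. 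This establishes the term-by-term equality.

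Before passing to the limit I must check that the right-hand side is meaningful, i.e. that $\tilde f$ is index-strict and that $h\circ g^{-1}$ is real $\tilde f$-integrable. Both follow from transporting the relevant sets through $g$. For any open $V\subset Y$ with $\tilde f$ fixed-point free on $\overline V\setminus V$, Lemma~\ref{ultimo lema auxiliar} applied to $g^{-1}$ and $\tilde f$ gives $i(Y,\tilde f,V)=i(X,f,g^{-1}(V))$; since $g$ is a homeomorphism, a fixed point of $\tilde f$ in $\overline V\setminus V$ would correspond to one of $f$ in $\overline{g^{-1}(V)}\setminus g^{-1}(V)$, so the sign condition of $f$ transfers to $\tilde f$ and $\tilde f$ is index-strict. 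Similarly, using $(h\circ g^{-1})^{-1}(J)=g(h^{-1}(J))$ together with the fact that $g$ preserves closures and interiors, the fixed-point-free conditions on $\overline{h^{-1}(p)}$ and on $\overline{h^{-1}((p,q))}\setminus\mathring{h^{-1}((p,q))}$ transfer to the corresponding sets for $h\circ g^{-1}$, and boundedness is immediate from equality of ranges; hence $h\circ g^{-1}$ is real $\tilde f$-integrable.

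Finally I would substitute the term-by-term equality into the displayed formula and take $n\to\infty$, obtaining
\[
\int_X h\,\lfloor d\,i_c(f)\rfloor=\lim_{n\to\infty}\frac{1}{2^n}\int_Y \lfloor 2^n(h\circ g^{-1})\rfloor\;d\,i_c(\tilde f)=\int_Y h\circ g^{-1}\,\lfloor d\,i_c(\tilde f)\rfloor.
\]
The upper Riemann-sum identity is proved identically, replacing floors by ceilings throughout. The only delicate point is the term-by-term equality, where one must verify the fixed-point-free hypothesis of Lemma~\ref{ultimo lema auxiliar} for the open set $\mathring{h^{-1}(I)}$ rather than for $h^{-1}(I)$ itself, which need not be open since $h$ is not assumed continuous; the inclusion $\overline{\mathring{h^{-1}(I)}}\setminus\mathring{h^{-1}(I)}\subset\overline{h^{-1}(I)}\setminus\mathring{h^{-1}(I)}$ is exactly what makes the argument go through.
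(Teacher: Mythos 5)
Your proof is correct and follows essentially the same route as the paper: apply Lemma~\ref{ultimo lema auxiliar} to identify each dyadic-level integral $\frac{1}{2^n}\int_X \lfloor 2^n h\rfloor\,d\,i_c(f)$ with its counterpart on $Y$, then pass to the limit via Definition~\ref{def sumas riemann}. The only difference is that you spell out details the paper leaves implicit---the set identity $(h\circ g^{-1})^{-1}(I)=g(h^{-1}(I))$, the fixed-point-free check on $\overline{\mathring{h^{-1}(I)}}\setminus\mathring{h^{-1}(I)}$, and the transfer of index-strictness and real integrability to $g\circ f\circ g^{-1}$ and $h\circ g^{-1}$---all of which are verified correctly.
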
   
\begin{proof}
    Due to Lemma~\ref{ultimo lema auxiliar} we have
\begin{equation*}
    \frac{1}{2^n}\int_X \lfloor 2^nh\rfloor\;d\,i_c(f)=\frac{1}{2^n}\int_Y \lfloor 2^nh\circ g^{-1}\rfloor\;d\,i_c(g\circ f\circ g^{-1})
\end{equation*}
for each $n\in \mathbb{N}$ (similarly for the upper Riemann-sums). The result then follows from Definition~\ref{def sumas riemann}.
\end{proof}

 \end{document}